\documentclass[12pt]{amsart}
\usepackage{amssymb}
\usepackage{amsmath}
\usepackage{bbm,float}
\usepackage{psfrag}
\usepackage{hyperref}

\numberwithin{equation}{section}
%\draftcopySetScale{10}

\newcommand{\be}{\begin{equation}}
\newcommand{\ee}{\end{equation}}
\newcommand{\ds}{\displaystyle}

\newtheorem{thm}{Theorem}
\newtheorem{cor}[thm]{Corollary}
\newtheorem{lem}{Lemma}
\newtheorem{prop}{Proposition}
\newtheorem{defn}{Definition}
\newtheorem{rem}{Remark}

\makeatletter
\def\Ddots{\mathinner{\mkern1mu\raise\p@
\vbox{\kern7\p@\hbox{.}}\mkern2mu
\raise4\p@\hbox{.}\mkern2mu\raise7\p@\hbox{.}\mkern1mu}}
\makeatother

\title{Monotone triangles and 312 Pattern Avoidance}
\author[A. Ayyer]{Arvind Ayyer}
\address{Arvind Ayyer\\
Institut de Physique Th\'eorique\\
IPhT, CEA Saclay, and URA 2306, CNRS\\
91191 Gif-sur-Yvette Cedex, France
}
\email{ ayyer@math.ucdavis.edu}

\author[R. Cori]{Robert Cori}
\address{Robert Cori\\
Labri, Universit\'e Bordeaux 1 \\
33405 Talence Cedex\\
 France}
\email{cori@labri.u-bordeaux.fr}

\author[D. Gouyou-Beauchamps]{Dominique Gouyou-Beauchamps}
\address{Dominique Gouyou-Beauchamps\\
LRI, Universit\'e Paris 11 \\
91405 Orsay Cedex\\
 France}
\email{Dominique.Gouyou-Beauchamps@lri.fr}

\date{\today}

\begin{document}

\begin{abstract}
  We demonstrate a natural bijection between a subclass of alternating
  sign matrices (ASMs) defined by a condition on the corresponding
  monotone triangle which we call the gapless condition and a subclass
  of totally symmetric self-complementary plane partitions defined by
  a similar condition on the corresponding fundamental domains or
  Magog triangles. We prove that, when restricted to permutations,
  this class of ASMs reduces to 312-avoiding permutations. This leads
  us to generalize pattern avoidance on permutations to a family of
  words associated to ASMs, which we call Gog words.  We translate the
  gapless condition on monotone trangles into a pattern avoidance-like
  condition on Gog words associated.  We estimate the number of
  gapless monotone triangles using a bijection with p-branchings.
\end{abstract}

\maketitle

\begin{center}{\bf Dedicated to Doron Zeilberger on the \\
    occasion of his sixtieth birthday}\end{center}

\section{Introduction}\label{sec:introd}
The enumeration formula for ASM matrices was first proved by Doron
Zeilberger \cite{z1} by showing that monotone (or Gog) triangles naturally
associated to them are equinumerous to Magog triangles associated to
totally symmetric self-complementary plane partitions.  Since the
enumeration formula was known for these plane partitions this gave the
result. The proof needed a long list of computations and since then,
the construction of a bijection between Gog and Magog triangles has
been and still is an open problem.  We consider here a family of monotone
triangles, which we call gapless, for which we show a simple bijection
with a subclass of Magog triangles also satisfying a gapless
condition.

%We examine in detail the combinatorial structure of gapless Gog
%triangles and give an algorithm to build all of them.

%Since permutation matrices are ASMs, we consider the monotone
%triangles associated to them, it turns out that the gapless ones are
%exactly those correponding to permutation avoiding the pattern 312.

It turns out that among monotone triangles associated to permutations,
the gapless ones are exactly those correponding to permutation
avoiding the pattern 312.  This leads us to give a {\em pattern
  avoidance-like} characterization for ASMs whose monotone triangles are
gapless.  For this purpose we introduce a family of words associated
to monotone triangles and generalize to these words the notion of avoidance
of the pattern 312. We note in passing that this notion of pattern
avoidance for ASMs is different from that considered by Johansson and
Linusson \cite{jl}.

One of the main aims of this article is to build on the relationship
between alternating sign matrices and totally symmetric
self-complemen\-tary plane partitions first considered by Mills,
Robbins and Rumsey \cite{mrr1}. The other aim is to extend the notion
of pattern-avoidance to other combinatorial structures related to
permutations.  One of the first works in this direction known to us is
that of Hal Canary \cite{canary} who established a relation between
tilings of an Aztec diamond and alternating sign matrices using Baxter
permutations. 

The organisation of the paper is as follows. We first remind the
reader of monotone (or Gog) triangles in Section~\ref{sec:triang}. We
then define the class of gapless monotone triangles for which we
investigate their combinatorial structure and give an algorithm to
build them in Sections \ref{sec:shapes} and
\ref{sec:growingshapes}. We analyze a subset of these gapless
triangles of rectangular shape in Section~\ref{sec:rect}.  A natural
simple bijection from gapless monotone triangles to gapless Magog
triangles is then shown in Section~\ref{sec:magog}.  We next proceed
to show the relationship between monotone triangles and 312 patterns
for permutations in Section~\ref{sec:312perms}.  This will turn out to
be the simple case for the main theorem about 312 subpatterns in Gog
words, which we define in Section~\ref{sec:gogs}.  The proof of the
pattern-avoidance theorem is then given in
Section~\ref{sec:312gwords}. We summarize our results and give
directions for future work in Section~\ref{sec:conc}. We also give
asymptotic estimates of the number of gapless monotone triangles in
Appendix~\ref{sec:ngaplessasm} and, for completeness, leading
asymptotics of the number of alternating sign matrices in
Appendix~\ref{sec:nasm}.

\section{Monotone triangles} \label{sec:triang}

\begin{defn} \label{def:gog} 
  A {\em monotone triangle} or {\em strict Gelfand pattern} or {\em
    Gog triangle} of size $n$ is an array $\pi = (a_{ij})$ of positive
  integers defined for $n \geq i \geq j \geq 1$ that is written in the
  form
\be
\begin{array}{c c c c}
a_{1,1} &  &  &  \\
a_{2,1} & a_{2,2} &  &   \\
%&  & \cdots & \cdots\\
\vdots & \vdots & \ddots &  \\
a_{n,1} & a_{n,2} & \cdots & a_{n,n},
\end{array}
\ee
where,
\begin{enumerate}
\item $a_{i,j} < a_{i,j+1}$, $a_{i,j} \leq a_{i-1,j}$ and $a_{i,j} \leq a_{i+1,j+1}$ whenever
  both sides are defined. That is to say, numbers increasing along rows,
  decreasing weakly along columns, and decreasing weakly along diagonals. 
\item $a_{n,j}=j$.
\end{enumerate}
\end{defn}

We note that this is close to the original convention in \cite{mrr2}
for writing the triangles. However, it is different from the
convention in \cite{z1}.  Monotone triangles are in natural bijection
with alternating sign matrices \cite{mrr2}. Since permutation matrices
are a class of ASMs, one can express each permutation as a monotone
triangle. The way to do this is to write the $i$th row as an ascending
list of the first $i$ elements in the permutation. For example, the
permutation 4312 can be written as
\be \label{egtriangle}
\begin{array}{c c c c}
4 &  &  &  \\
3  & 4 &  &   \\
1 & 3 & 4 & \\
1 & 2 & 3 & 4,
\end{array}
\ee

We now define a special subset of monotone triangles; those with a gap.
\begin{defn} \label{def:forb} 
  A monotone triangle $a$ contains {\em a gap at
    position $(i,j)$} if $a_{i,j}-a_{i+1,j}>1$. A {\em gapless monotone
    triangle} is a monotone triangle which contains no gaps.
\end{defn}

In the above example \eqref{egtriangle}, the monotone triangle  
contains a gap at position $(2,1)$.  The monotone
triangle corresponding to the identity permutation is an extreme
example of a gapless one. The columnwise difference in that case is
zero everywhere.

In the definition of a monotone triangle, condition (1) consists in 
 the satisfaction of three  inequalities, notice that the third one can
 be omitted if the triangle is assumed to be gapless, since for integers 
the relation:
$$a_{i,j}< a_{i, j+1} \leq a_{i+1, j+1} + 1$$
 implies $a_{i,j} \leq a_{i+1, j+1}$.

\medskip

\noindent
More precisely we have:
\begin{rem}
A triangle $ (a_{ij})$ of positive
  integers, (where $n \geq i \geq j \geq 1$) is a gapless monotone triangle
if the following conditions are satisfied:
$
\begin{array}{ll}
(0) & \ \ \  a_{i,j} \leq a_{i-1,j} \leq a_{i,j} + 1 \\ 
 (1)'& \ \ \ a_{i,j} < a_{i,j+1}\\
 
(2) &\ \ \ a_{n,j}=j
\end{array}
$
\end{rem}

\section{Shapes of gapless monotone triangles} \label{sec:shapes}

To any gapless monotone triangle $(a_{i,j})$ of size $n$ we associate
a {\em shape}, that is an array $(s_{i,j})$ of size $n-1$ containing
entries 0 and 1 and defined by
$$ s_{i,j} = a_{i,j} - a_{i+1,j},$$
for $1 \leq j \leq i < n$.
For instance the shape of 
\be
\begin{array}{c c c c}
3 &  &  &  \\
2  & 4 &  &   \\
2 & 3 & 4 & \\
1 & 2 & 3 & 4,
\end{array}
\ee
is
\be \label{shapeg}
\begin{array}{c c c c}
1 &  &  &  \\
0  & 1 &  &   \\
1 & 1 & 1 &
\end{array}
\ee

We remark that any triangle $(s_{i,j})$ with 0,1 entries is not
necessarily the shape of a gapless monotone triangle. A simple
characterisation can be given by examining the partial sums of two
consecutive columns.

To any triangle $(s_{i,j})$ 
 of size $n-1$ with 0,1 entries we associate $n-1$  words 
$f^{(1)}, f^{(2)}, \cdots,  f^{(n-1)}$,
on the alphabet   $\{0,1\}$,
 representing its columns.

More  precisely the word  $f^{(j)}$ is 
of length $j$ and given by:
$$f^{(j)} = s_{n-1,n-j} \cdots s_{n-j,n-j}.$$

\begin{prop}\label{prop:consecutivecolumns}
Two words $g$ and $h$ on the alphabet  $\{0,1\}$ correspond to two consecutive
columns in the shape of a gapless monotone triangle if and only  their lengths
$|g|, |h|$  satisfy $|h| - |g| = 1$ and for any $i \leq |g|$ 
we have 
\be
 \sum_{k=1}^i g_k \geq \sum_{k=1}^i h_k
\ee
Moreover a sequence of words $f^{(1)}, f^{(2)}, \cdots f^{(n)}$ is
 the shape of a monotone
gapless triangle if and only if for any $k < n$ the words $f^{(k)}$ and $f^{(k+1)}$
satisfy the condition given above for $g$ and $h$.
\end{prop}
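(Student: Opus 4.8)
The plan is to reconstruct the triangle entries from the shape together with the fixed bottom row, and then translate the defining conditions of a gapless monotone triangle (as restated in the Remark) directly into statements about partial sums of the column words. The key observation is that, for a $0/1$-triangle, condition $(0)$ requiring $s_{i,j}\in\{0,1\}$ and condition $(2)$ fixing the bottom row are automatic once we set $a_{n,j}=j$ and define
$$ a_{i,j} = j + \sum_{m=i}^{n-1} s_{m,j}, $$
so the only genuine constraint left to check is $(1)'$, the strict increase along rows. Since $(1)'$ relates only entries of adjacent columns $c-1$ and $c$ in the same row, the whole question decomposes into a condition on each consecutive pair of columns, which is exactly the content of the proposition.

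First I would record the partial-sum identity. Reading column $j$ from the bottom, the $i$-th partial sum telescopes: $\sum_{m=n-i}^{n-1} s_{m,j} = a_{n-i,j}-j$. Writing $g=f^{(k)}$ and $h=f^{(k+1)}$ for two consecutive words — so $g$ is the shorter one, representing column $c:=n-k$, and $h$ the longer one, representing column $c-1$, whence $|h|-|g|=1$ automatically — the inequality $\sum_{l=1}^{i} g_l \ge \sum_{l=1}^{i} h_l$ becomes, after substituting the two partial-sum formulas and cancelling,
$$ a_{n-i,\,c} > a_{n-i,\,c-1}. $$
This is precisely condition $(1)'$ for the pair of columns $c-1<c$ in row $n-i$.

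Next I would check that the admissible range $1\le i\le |g|=k$ sweeps out exactly the rows $n-1,\dots,c$ in which both columns are present, the bottom row $n$ being handled automatically by $(2)$ since $c>c-1$ there. Hence the prefix-sum condition on a consecutive pair of words is equivalent to $(1)'$ holding for the corresponding adjacent columns across all rows, which proves the first (local) statement; for its sufficiency direction one simply completes the two columns to a full $0/1$-triangle (for instance by appending all-$1$ columns to their right), which is automatically a valid shape by the second statement. The global statement then follows by taking the conjunction over all adjacent column pairs: since $(0)$ and $(2)$ are free and the diagonal inequality is redundant for gapless triangles, as already noted in the text, a $0/1$-triangle is the shape of a gapless monotone triangle if and only if $(1)'$ holds everywhere, i.e.\ if and only if every consecutive pair $f^{(k)},f^{(k+1)}$ satisfies the prefix-sum condition.

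I expect the main obstacle to be purely the index bookkeeping: keeping straight which word is longer, matching the word index $f^{(m)}$ to the column index $n-m$ and to the reversed, bottom-up reading order, and verifying that the admissible range of $i$ matches precisely the rows in which $(1)'$ is a nonvacuous constraint. The mathematical content is light once the partial-sum identity $a_{n-i,j}=j+\sum_{l=1}^{i} f^{(n-j)}_l$ is in place.
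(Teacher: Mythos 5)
Your proposal is correct and follows essentially the same route as the paper's proof: both rest on the telescoping identity $a_{i,j} = j + \sum_{k=1}^{n-i} f^{(n-j)}_k$, under which the prefix-sum inequality is exactly the reformulation of the row condition $a_{i,j} < a_{i,j+1}$. You merely fill in details the paper leaves implicit (the index bookkeeping and the completion of a valid pair of columns to a full gapless shape), which is fine.
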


\begin{proof}
It suffices to remark that the $a_{i,j}$ of the
 corresponding monotone triangles are obtained
from the $f^{(i)}$ by the relation
$$ a_{i,j} \ \ = \ \ j + \sum_{k=1}^{n-i} f_k^{(n-j)}$$
so that the above inequality is a reformulation of the relation
$a_{i,j} < a_{i,j+1}$.
\end{proof}

\section{Growth processes for gapless shapes
%: column and row  insertion
}\label{sec:growingshapes}

In the sequel {\em gapless shape} will mean shape of a gapless monotone
triangle.  That is a triangle $s_{ij}$ with entries 0 and 1 which
columns satisfy the condition in Proposition
\ref{prop:consecutivecolumns}.

\subsection{Column insertion}
A natural way to build a gapless shape  is to proceed in giving the words
$f^{(1)}, f^{(2)}, \cdots, f^{(n)}$ successively. The choice of possible
$f^{(n)}$ depends only on the value of $f^{(n-1)}$. Moreover all the 
 $f^{(n)}$ which may follow a given 
$ f^{(n-1)}$  can be build  by using a simple process which we describe
below.

 For that purpose we consider some  words on the alphabet  $\{a, b\}$.
We recall that a word $w$ on this alphabet is a Dyck word
if for any prefix $w'$ of $w$  the number
 of occurences of the letter $a$ (denoted $|w'|_a$) 
is not less than the number of occurrences
of the letter $b$ (denoted $|w'|_b)$.
We now 
introduce a mapping $\phi$ associating 
to two words 
of length $n-1$ and $n$ on the alphabet $\{0,1\}$ a word of length 
$2n-1$ on the alphabet  $\{a, b\}$.

We give first  a mapping $\theta$ giving for each pair of letters
in $\{0,1\}$ a word of length 2 on the alphabet $\{a,b\}$: 

$$ \theta(0,0) = ab, \ \  \theta(1,1) = ba, \ \   \theta(1,0) = aa, \ \ 
 \theta(0,1) = bb.$$

Considering  two words $g$ and $h$ on the alphabet $\{0,1\}$ of lengths 
$|h| = |g|+1 = n $ 
 we associate a
word $w=\phi(g,h)$ of length $2n-1$ on the alphabet $\{a,b\}$ given by
$$w=a \theta(g_1,h_1) \theta(g_2,h_2) \cdots \theta(g_{n-1},
h_{n-1}).$$
For example, the word corresponding to the first two columns in the
shape \eqref{shapeg}, $g=(1,1)$ and $h=(1,0,1)$, is
$$w=\phi(g,h)=a \theta(1,1) \theta(1,0) = abaaa,$$
which is a valid Dyck word.

\begin{prop}\label{prop:nboccurences}
Two words  $g$ and $h$ of lengths $n-1$ and $n$ respectively 
correspond to two consecutive columns in a gapless  shape 
if and only if the word $ w = \phi(g,h)$, 
 is a prefix of a Dyck word.

\end{prop}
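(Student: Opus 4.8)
The plan is to replace the combinatorial notion ``prefix of a Dyck word'' by an arithmetic height function and match it against the partial-sum inequalities of Proposition~\ref{prop:consecutivecolumns}. For a word $u$ on $\{a,b\}$ and an index $\ell$, let $d(\ell)=|u_\ell|_a-|u_\ell|_b$ denote the excess of $a$'s over $b$'s in the length-$\ell$ prefix $u_\ell$. By the definition of a Dyck word recalled above, $w=\phi(g,h)$ is a prefix of a Dyck word exactly when $d(\ell)\ge 0$ for every $\ell$, so the whole statement reduces to understanding the sign of $d$ along $w$ and comparing it with the condition $\sum_{k=1}^i g_k\ge\sum_{k=1}^i h_k$ from Proposition~\ref{prop:consecutivecolumns}.

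The key observation I would establish first is a block-by-block accounting. Reading the four values of $\theta$, the net change of the excess across the block $\theta(g_k,h_k)$ is $+2,-2,0,0$ in the cases $(g_k,h_k)=(1,0),(0,1),(0,0),(1,1)$, and in every case this equals $2(g_k-h_k)$. Writing $G_m=\sum_{k=1}^m g_k$ and $H_m=\sum_{k=1}^m h_k$, it follows that after the leading letter $a$ and the first $m$ complete blocks, i.e.\ at a block boundary, one has
$$d = 1 + 2\,(G_m-H_m).$$
Since the partial sums are integers, this is nonnegative if and only if $G_m\ge H_m$, which is precisely the inequality of Proposition~\ref{prop:consecutivecolumns}. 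Thus the ballot condition read only at block boundaries already encodes the desired equivalence, and by that Proposition it encodes exactly the statement ``$g,h$ are consecutive columns of a gapless shape.''

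The step I expect to be the real obstacle — or at least the one needing care — is checking the intermediate positions, since the prefix condition must hold after every single letter, not merely at block ends. I would compute the excess after the first letter of the $m$-th block as $1+2(G_{m-1}-H_{m-1})+\epsilon_m$, where $\epsilon_m=+1$ when that first letter is $a$ (the cases $h_m=0$) and $\epsilon_m=-1$ when it is $b$ (the cases $h_m=1$); the binding case $\epsilon_m=-1$ gives the value $2(G_{m-1}-H_{m-1})$, which is nonnegative exactly when $G_{m-1}\ge H_{m-1}$. Hence every mid-block constraint is implied by the boundary constraint at the previous block, the $m=1$ instance being saved precisely by the leading $a$, which keeps the height at $1$ before the first block and thus at $0$ after a possible initial $b$. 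Combining the boundary and intermediate checks, $d\ge 0$ holds everywhere if and only if $G_m\ge H_m$ for all $m\le n-1$, and invoking Proposition~\ref{prop:consecutivecolumns} then settles both directions of the equivalence at once.
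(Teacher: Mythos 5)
Your proof is correct and takes essentially the same approach as the paper, whose entire proof is the one-line remark that the partial-sum condition of Proposition~\ref{prop:consecutivecolumns} ``translates naturally into counting the number of occurences of $a$ and $b$ in $\phi(g,h)$.'' Your block-by-block height computation --- the boundary identity $d=1+2(G_m-H_m)$, the check that each mid-block constraint reduces to the boundary constraint of the previous block, and the role of the leading $a$ in saving the case $m=1$ --- is precisely the detail the paper leaves implicit, so nothing further is needed.
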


\begin{proof}
The condition on the partials sums of $g$ and $h$ given in Proposition
\ref{prop:consecutivecolumns} translates naturally into counting the
number of occurences of $a$ and $b$ in $\phi(g,h)$.
\end{proof}

\begin{cor}
The number of pairs of words $g,h$ such that $$|h| = |g|+1 = n$$ and 
$g$ and $h$ represent consecutive columns in a gapless shape 
is equal to the central binomial coefficient
$$\binom{2n}{n}$$
\end{cor}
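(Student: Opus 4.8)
The plan is to reduce the count to a word count via Proposition~\ref{prop:nboccurences}, and from there to a standard lattice-path enumeration. First I would observe that $\theta$ is a bijection from $\{0,1\}^2$ onto $\{a,b\}^2$, since its four values $ab, ba, aa, bb$ are exactly the four two-letter words on $\{a,b\}$. Consequently, prepending the letter $a$ to a concatenation of $n-1$ blocks $\theta(g_i,h_i)$ sets up a bijection between the words $w$ of length $2n-1$ on $\{a,b\}$ that begin with $a$ and the pairs $(g,h')$, where $g=g_1\cdots g_{n-1}$ and $h'=h_1\cdots h_{n-1}$ range over $\{0,1\}^{n-1}\times\{0,1\}^{n-1}$.

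Next I would account for the last letter of $h$. The defining inequalities in Proposition~\ref{prop:consecutivecolumns} range only over $i\leq|g|=n-1$, so they constrain $h_1,\dots,h_{n-1}$ but leave $h_n$ completely free. Combined with Proposition~\ref{prop:nboccurences}, this shows that the pairs $(g,h)$ representing consecutive columns of a gapless shape are in bijection with (prefixes of Dyck words of length $2n-1$) $\times\,\{0,1\}$, where the factor $\{0,1\}$ records the free choice of $h_n$. Thus the number we want equals $2$ times the number of words of length $2n-1$ on $\{a,b\}$ all of whose prefixes contain at least as many occurrences of $a$ as of $b$.

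Then I would invoke the standard enumeration of such words: reading $a$ as $+1$ and $b$ as $-1$, they are exactly the nonnegative lattice paths of length $m=2n-1$, whose number is $\binom{m}{\lfloor m/2\rfloor}=\binom{2n-1}{n-1}$. Finally the corollary follows from the elementary identity
\be
2\binom{2n-1}{n-1} = \binom{2n-1}{n-1} + \binom{2n-1}{n} = \binom{2n}{n},
\ee
using $\binom{2n-1}{n-1}=\binom{2n-1}{n}$ together with Pascal's rule.

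The $\theta$-bijection is routine; the one genuine ingredient is the count $\binom{2n-1}{n-1}$ of Dyck prefixes of odd length, so the main point to get right is that it is the full prefix condition (not merely the whole word being balanced) that Proposition~\ref{prop:nboccurences} supplies, which is exactly the nonnegativity of every partial sum. If one prefers not to quote the bounded lattice-path formula, an alternative is to absorb the free bit $h_n$ into an extra step, turning each admissible pair into an unconstrained path of length $2n$ with $n$ up-steps, counted by $\binom{2n}{n}$ directly; the obstacle there is verifying that this extension is a clean bijection at the final step, where the path is allowed to leave the nonnegative region.
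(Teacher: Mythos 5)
Your proof is correct and takes essentially the same approach as the paper's: both reduce via Proposition~\ref{prop:nboccurences} to counting prefixes of Dyck words of length $2n-1$ (your $\binom{2n-1}{n-1}$ is the same number as the paper's $\binom{2n-1}{n}$), observe that the last letter of $h$ is unconstrained and contributes a factor of $2$, and conclude with $2\binom{2n-1}{n}=\binom{2n}{n}$. You merely spell out details the paper treats as well known, namely the bijectivity of $\theta$ and the enumeration of nonnegative paths of odd length.
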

\begin{proof}
Using the well-known fact that the number of prefixes of Dyck words of
length $2n-1$ is equal to the binomial coefficient $\binom{2n-1}{n}$,
and that the last letter of $h$ may be 0 or 1, we get
$2\binom{2n-1}{n} = \binom{2n}{n}$.
\end{proof}

We used the column insertion construction to compute the number of all
gapless shapes of monotone triangles up to size 12 obtaining the
numbers below. For comparison, we also list the total number of ASMs.

\begin{table}[h!] 
\begin{tabular}{|c|c|c|}
\hline
Size & Number of ASMs in bijection & Total number of ASMs \\
\hline
1 & 1 & 1\\
2 & 2 & 2\\
3 & 6 & 7\\
4 & 26 & 42\\
5 & 162 & 429 \\
6 & 1450 & 7436 \\
7 & 18626 & 218348 \\
8 & 343210 &  10850216  \\
9 & 9069306 & 911835460 \\
10 & 343611106 &  129534272700\\ 
11 & 18662952122 & 31095744852375\\
12 & 1453016097506 & 12611311859677500 \\
\hline
\end{tabular}
\vspace{0.3cm}
\caption{The number of ASMs in bijection and the
  total number of ASMs.} \label{tab:bijasm}
\end{table}
\vspace{-1cm}

\section{Rectangular shapes} \label{sec:rect}
Although it seems difficult to obtain a closed formula for the number
of triangular shapes, the number of rectangular ones have a nice
enumeration formula.

\begin{defn}
A gapless rectangular shape is a rectangular  array $r_{i,j}$ where
$1 \leq i \leq p$, $ 1 \leq j \leq m$
 of 0's and 1's such that for any $j, i$ (where $1 \leq  j < m$ and 
$1 \leq i \leq p$),
\be
 \sum_{k=i}^p r_{k,j} \leq \sum_{k=i}^p r_{k,j+1}.
\ee
\end{defn}

To any  rectangular shape  can be associated a cumulant
rectangular array $s_{i,j}$ such that 
$s_{i,j} = \sum_{k=i}^p r_{k,j}$.
Then the condition on $r$ translates on $s$ into
$$ s_{i,j} \leq s_{i,j+1}.$$
Moreover since $r_{i,j} \in \{0,1\}$,
$$s_{i,j+1} -1 \leq s_{i,j}  \leq p-i+1.$$
An example of a gapless rectangle and its cumulant array is given in
Figure~\ref{fig:gaplessrec}. 
\begin{figure}[h]
\[
\begin{array}{cccccccccccccccccccc}
0 & 0 & 0 & 0 & 1 & 1 & 1 & 1 & 0  & \ \ \ \ \ \ \ \ \ \ \ & 0 & 1 & 1 & 1&
3 & 3 & 3 & 3 & 3 \\
0 & 0 & 0 & 0 & 1 & 1 & 1 & 0 & 1 & &  0 & 1 & 1 & 1 & 2 & 2 & 2 & 2 & 3\\
0 & 1 & 1 & 1 & 1 & 0 & 0 & 1 & 1 &  & 0 & 1 & 1 & 1 & 1 & 1 & 1 & 2 & 2 \\
0 & 0 & 0 & 0 & 0 & 1 & 1 & 1 & 1 & & 0 & 0 & 0 & 0 & 0 & 1 & 1 & 1 & 1\\
\end{array}
\]
\caption{A gapless rectangle and its cumulant array}  \label{fig:gaplessrec}
\end{figure}

To any cumulant array is associated a triangular array $t_{i,j}$ such
that $t_{i,j}$ is the smallest $k$ such that $s_{n+2-i-j,k} \geq j$ if
such $k$ exists and equal to $m+1$ otherwise.  The triangular array
associated to the cumulant array in Figure~\ref{fig:gaplessrec} is
given by
\be \label{brancheg}
\begin{array}{cccc}
6 & 8 & 9 & 10\\
2 & 5 & 5 & \\
2 & 5 & & \\
2 & & &  
\end{array} 
\ee
It turns out that the  triangular array built above belongs to a
family  of arrays called branchings and  considered
in \cite{gelbart72} and in  \cite{carstan}.

\begin{defn}\label{def:branchings}.
A $p$-branching is a triangular array of postive integers
$b_{i,j}$ where $1\leq i\leq p$,  $1 \leq j \leq p+1-i$
such that the entries are weakly increasing along rows and weakly
decreasing along columns,
$$b_{i+1,j} \leq b_{i,j} \leq b_{i,j+1}$$
\end{defn}

The relation between gapless rectangular shapes, cumulant arrays and
$p$-branchings may be summarized in the following statement.
\begin{prop}
The above construction is a bijection between gapless 
rectangles of size $p, m$ and $p$-branchings with entries
less or equal to $m+1$.
\end{prop}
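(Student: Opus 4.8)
The plan is to factor the construction through the classical duality between a weakly increasing integer step sequence and its sequence of first-reach positions. Since the passage $r\mapsto s$ given by the partial sums $s_{i,j}=\sum_{k=i}^{p}r_{k,j}$ is visibly invertible (one recovers $r_{i,j}=s_{i,j}-s_{i+1,j}$, with the convention $s_{p+1,j}:=0$), it suffices to produce a bijection between cumulant arrays and $p$-branchings. Here I regard a \emph{cumulant array} as an integer array $(s_{\ell,k})$, $1\le\ell\le p$, $1\le k\le m$, that is weakly increasing along each row and satisfies $0\le s_{\ell,k}-s_{\ell+1,k}\le 1$; these are exactly the conditions on $s$ that are equivalent to $r_{i,j}\in\{0,1\}$ together with the gapless inequality (the bound $s_{\ell,k}\le p+1-\ell$ is then automatic, being a sum of $p+1-\ell$ binary entries). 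Writing $T_\ell(v)$ for the smallest $k$ with $s_{\ell,k}\ge v$ (and $m+1$ if none exists), the definition of the triangular array reads $t_{i,j}=T_{p+2-i-j}(j)$, so that the anti-diagonal $i+j=p+2-\ell$ of $t$ records precisely the first-reach positions $T_\ell(1),T_\ell(2),\dots$ of row $\ell$. A quick index count shows this anti-diagonal has exactly $p+1-\ell$ cells, matching the range $1\le v\le p+1-\ell$ of attainable values in row $\ell$.

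Next I would check that the forward map lands among $p$-branchings with entries at most $m+1$. Both monotonicities of Definition~\ref{def:branchings} come directly from the two defining features of $s$. For the row inequality $t_{i,j}\le t_{i,j+1}$, equivalently $T_\ell(v)\le T_{\ell-1}(v+1)$, I use $s_{\ell,k}\ge s_{\ell-1,k}-1$: if row $\ell-1$ has reached value $v+1$ at column $k$, then row $\ell$ has reached $v$ there, so its first reach is no later. For the column inequality $t_{i+1,j}\le t_{i,j}$, equivalently $T_{\ell-1}(v)\le T_\ell(v)$, I use $s_{\ell-1,k}\ge s_{\ell,k}$: having reached $v$ in row $\ell$ forces having reached it in row $\ell-1$ at the same or an earlier column. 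Since each $T_\ell(v)$ lies in $\{1,\dots,m+1\}$, the image is a $p$-branching with entries bounded by $m+1$.

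For the inverse I would read the anti-diagonals of a $p$-branching $b$ back into a cumulant array: set $T_\ell(v):=b_{\,p+2-\ell-v,\;v}$ for $1\le v\le p+1-\ell$ and define $s_{\ell,k}:=\#\{v:T_\ell(v)\le k\}$. The step-sequence/first-reach duality is standard and immediately yields that $s\mapsto(T_\ell)_\ell$ and this map are mutually inverse; in particular each reconstructed row is automatically weakly increasing. What remains — and this is the crux of the proposition — is to verify that the reconstructed $s$ is genuinely a cumulant array, i.e. that $s_{\ell,k}-s_{\ell+1,k}\in\{0,1\}$, for this is where the branching hypotheses on $b$ are used in full. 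From $b_{i-1,v}\ge b_{i,v}$ (columns weakly decreasing) and $b_{i-1,v}\le b_{i-1,v+1}$ (rows weakly increasing), applied with $i=p+2-\ell-v$, one extracts the interleaving
\[
 T_\ell(1)\ \le\ T_{\ell+1}(1)\ \le\ T_\ell(2)\ \le\ T_{\ell+1}(2)\ \le\ \cdots\ \le\ T_{\ell+1}(p-\ell)\ \le\ T_\ell(p+1-\ell),
\]
since $T_{\ell+1}(v)=b_{i-1,v}$ sits between $T_\ell(v)=b_{i,v}$ and $T_\ell(v+1)=b_{i-1,v+1}$.

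Feeding this interleaving into a threshold count finishes the argument. Fix $k$ and let $a=s_{\ell,k}=\#\{v:T_\ell(v)\le k\}$, so that $T_\ell(1),\dots,T_\ell(a)\le k<T_\ell(a+1)$. The upper bound $s_{\ell+1,k}\le a$ follows from $T_{\ell+1}(v)\ge T_\ell(v)$, while for $v\le a-1$ the interleaving gives $T_{\ell+1}(v)\le T_\ell(v+1)\le T_\ell(a)\le k$, so at least $a-1$ of the $T_{\ell+1}(v)$ are $\le k$, whence $s_{\ell+1,k}\ge a-1$. Thus $s_{\ell,k}-s_{\ell+1,k}\in\{0,1\}$, so the reconstructed $r$ is a gapless rectangle, and the two maps are mutually inverse. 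I expect this interleaving step, together with the careful count it feeds, to be the only genuinely delicate part; the remaining verifications are the routine partial-sum and step-function bookkeeping.
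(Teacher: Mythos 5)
Your proof is correct, and it is worth noting at the outset that the paper itself gives no proof of this proposition: it is asserted immediately after the description of the cumulant array and the triangular array, so your argument supplies a verification the authors left implicit rather than competing with an existing one. Your route is the natural one, and its two essential ingredients are exactly where the content lies: the identification of the anti-diagonal $i+j=p+2-\ell$ of the triangle with the first-reach sequence $T_\ell(1),\dots,T_\ell(p+1-\ell)$ of row $\ell$ of the cumulant array, and, for the inverse, the interleaving $T_\ell(v)\le T_{\ell+1}(v)\le T_\ell(v+1)$ extracted from the two inequalities of Definition~\ref{def:branchings}, which your threshold count converts into $s_{\ell,k}-s_{\ell+1,k}\in\{0,1\}$, i.e.\ into a genuine $\{0,1\}$-valued gapless rectangle. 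Three minor remarks. First, you have tacitly corrected a misprint: the paper's displayed inequality $s_{i,j+1}-1\le s_{i,j}$ is false as stated (it already fails on the paper's own Figure~\ref{fig:gaplessrec}, where $s_{1,4}=1$ and $s_{1,5}=3$); the condition actually forced by $r_{i,j}\in\{0,1\}$ is the one you use, namely $s_{i,j}-s_{i+1,j}\in\{0,1\}$. Second, the paper's formula $s_{n+2-i-j,k}$ has $n$ where $p$ is meant, and your reading $t_{i,j}=T_{p+2-i-j}(j)$ is the intended one, as it reproduces the branching \eqref{brancheg} from the example of Figure~\ref{fig:gaplessrec}. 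Third, a point of ordering rather than substance: the appeal to the step-sequence/first-reach duality for mutual inverseness requires each sequence $(T_\ell(v))_v$ to be weakly increasing, which in the inverse direction is only furnished by the interleaving you state in the following sentence; it would be cleaner to record that monotonicity before invoking the duality. None of this affects correctness, and the threshold-counting style of your argument is moreover consonant with the counting lemmas the paper does spell out in its proof of the subsequent proposition relating $p$-branchings to semi-standard Young tableaux.
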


Recall that a semi-standard Young tableau is an array $t_{i,j}$ of entries
 weakly increasing along rows, strictly increasing along columns, that is
$$t_{i,j} \leq t_{i,j+1} \text{ and } t_{i,j} < t_{i+1,j}.$$
 Then the following proposition allows us to obtain an enumeration 
formula for gapless rectangles.

\begin{prop}
Let $p,m$ be two integers not less than 1.  There is a bijection
between $p$-branchings with entries not greater than $m$ and the set
of semi-standard Young tableaux with entries in $1,2,\ldots p$ and
having less than $m$ columns.
\end{prop}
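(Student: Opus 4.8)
The plan is first to normalise the two sides to a common shape. Reflecting a $p$-branching $(b_{i,j})$ in a horizontal axis and complementing its entries by $x\mapsto m+1-x$ turns the two branching inequalities $b_{i+1,j}\le b_{i,j}\le b_{i,j+1}$ into weak monotonicity along rows and down columns; since row $i$ of the branching has length $p+1-i$, the underlying diagram is the staircase $\delta_p=(p,p-1,\dots,1)$. Thus a $p$-branching with entries at most $m$ is the same datum as a filling of $\delta_p$ by values in $\{1,\dots,m\}$ that is weakly monotone in both directions, i.e.\ a (reverse) plane partition of $\delta_p$ bounded by $m$. The structural fact I would lean on is that $\delta_p$ is self-conjugate; this is exactly what lets the bound $p$ (rows) and the entry bound $m$ trade roles with the \emph{column count} of the target tableaux, turning ``entries $\le m$'' into ``fewer than $m$ columns''.

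\textbf{Matching the two objects via chains of partitions.} An SSYT with entries in $\{1,\dots,p\}$ and fewer than $m$ columns is the standard data of a chain $\emptyset=\mu^{(0)}\subseteq\mu^{(1)}\subseteq\cdots\subseteq\mu^{(p)}$ in which every skew shape $\mu^{(k)}/\mu^{(k-1)}$ is a horizontal strip and $\mu^{(p)}_1\le m-1$; on the branching side I would read the plane partition of $\delta_p$ through its chain of super-level sets $\{c:\text{value at }c\ge v\}$. I would then construct the bijection by induction on $p$, peeling one outer layer at a time: the cells of $\delta_p$ carrying the maximal value on one side, and the cells carrying the largest letter on the other, reducing in each case to a $(p-1)$-object to which the inductive hypothesis applies. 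The step that must convert the merely \emph{weak} column-monotonicity of the branching into the \emph{strict} columns of an SSYT is an insertion/rectification of RSK or jeu-de-taquin type, or equivalently the Lindström--Gessel--Viennot correspondence applied to the non-intersecting lattice-path system that encodes the branching inequalities.

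\textbf{The main obstacle.} The crux is to show that the peeled layers are \emph{exactly} horizontal strips and that the entry bound $\le m$ becomes precisely the column bound $<m$, with no over- or under-counting at the boundary of the staircase. I expect this to be the delicate point because the correspondence cannot be a termwise one: a direct check at $p=2$ shows that pairing ``first column of the branching'' with ``cells of largest letter'' does not even match the refined counts (for fixed outer value $z$ one gets $\binom{z+1}{2}$ choices on the branching side against $z(m-z)$ on the tableau side), so the correct map is genuinely global, governed by the insertion/path mechanism rather than by reading off rows or columns. Injectivity should then fall out of the invertibility of whichever column-insertion or path-splitting procedure is used; the harder half is surjectivity, namely verifying that every SSYT with entries in $\{1,\dots,p\}$ and strictly fewer than $m$ columns is hit, which is where the self-conjugacy of $\delta_p$ and the exactness of the $p\leftrightarrow m$ exchange have to be checked carefully.
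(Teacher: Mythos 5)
There is a genuine gap: what you have written is a strategy outline, not a proof. The bijection is never actually constructed --- you defer it to ``an insertion/rectification of RSK or jeu-de-taquin type, or equivalently LGV,'' and you yourself flag as unverified precisely the points on which the statement lives: that the peeled layers are horizontal strips, that the entry bound $\le m$ turns into the column bound $<m$ exactly, and surjectivity. Nothing in the sketch supplies these steps, so no map between the two sets has in fact been exhibited, let alone shown to be a bijection.

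Moreover, the one concrete structural claim you do make --- that ``the correct map is genuinely global'' and cannot be termwise --- is false, and it is what steers the proposal away from the short argument. The paper's bijection is row-local: row $i$ of the branching $B$, a weakly increasing word $b_{i,1}\le\cdots\le b_{i,p+1-i}$ with entries in $\{1,\dots,m\}$, is sent to row $i$ of the tableau $T$ by $t_{i,j}=p+1-\#\{l : b_{i,l}\ge j+1\}$ for $1\le j\le k_i-1$, where $k_i=b_{i,p+1-i}$ is the largest entry of that row; each row is independently complement-conjugated, the monotonicity of the counting function $k\mapsto \#\{l:b_{i,l}\ge k\}$ gives weak row increase in $T$, and the inter-row inequalities $b_{i+1,l}\le b_{i,l}$ together with the fact that row $i$ is one entry longer than row $i+1$ force $\#\{l:b_{i,l}\ge j+1\}\ge \#\{l:b_{i+1,l}\ge j+1\}+1$ (the extra last entry of the longer row is automatically counted), which is exactly the strict column increase $t_{i,j}<t_{i+1,j}$. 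The inverse is equally explicit: row $i$ of $B$ consists of $t_{i,k}-t_{i,k-1}$ entries equal to $k$ for $k>1$, $p+1-t_{i,l_i}$ entries equal to $l_i+1$ (with $l_i$ the length of row $i$ of $T$), and $1$'s to fill out length $p+1-i$. Your $p=2$ refined-count objection only rules out the particular pairing you tested (first column of $B$ against cells of the largest letter); under the row-local map the matching statistic is ``top-row maximum $z$ of $B$'' versus ``first-row length $z-1$ of $T$,'' and both sides are then counted by $\binom{z+1}{2}$, so the refined counts do agree. Your opening reformulation (reflect and complement to get a plane partition of staircase shape bounded by $m$) is correct but unnecessary here, and the heavy machinery it leads you to invoke is never deployed, so the proposal as written does not prove the proposition.
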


\begin{proof}
Consider the branching $B=(b_{i,j})$, we build a semi-standard
Young tableau $T$ associated to it performing the following algorithm:

For each $i$ the $i$-the row of $T$ has length $k_i-1$ where $k_i$ is
the largest value appearing in the $i$th row of $B$. Then $t_{i,j}$ is
equal to $p+1$ minus the number of elements in the $i$-th row of $B$
which are not less than $j+1$.  The fact that $T$ is indeed a
semi-standard Young tableau follows from two simple remarks. The first
one shows that $t_{i,j} \leq t_{i,j+1}$ and the second shows that
$t_{i,j} < t_{i+1,j}$.
\begin{itemize}
\item Let $u= u_1, u_2, \ldots, u_i$ be any sequence of integers and
  let $a_k$ be the number of elements in $u$ not less than a given
  integer $k$ then $a_k \geq a_{k+1}$.
\item Let $u= u_1, u_2, \ldots u_i$ and $v = v_1, v_2, \ldots,
  v_{i-1}$ be two non decreasing sequences of integers such that for
  all $j < i$, $u_j\geq v_j$ and let $a_k$ (resp. $b_k$) be the number
  of elements in $u$ (resp. $v$) not less than a given integer
  $k$. Then $a_k > b_k$ for all $k$.
\end{itemize}

\medskip

Conversely given $p,m$ and a semi-standard Young tableau $T$ with
entries not greater than $p$ and having at most $m$ columns we build
the branching $B$ by the process described below.

For each $i$ from $1$ to $p$, the $i$-th row of $B$ (which has length
$p+1-i$) consists of $t_{i,k}-t_{i,k-1}$ entries equal to $k$ for each
$k >1$. If the length of the $i$th row of $T$ is $l_i$, fill in
$p+1-t_{i,l_i}$ entries equal to $l_i+1$. Since the $i$th row of $B$
has to have length $p+1-i$, we fill the row with remaining entries
equal to 1 and arrange the entries in non-decreasing order.

\end{proof}

To illustrate the above bijection we give the semi-standard Young
tableau associated to the branching in \eqref{brancheg}, 
\[
\begin{array}{ccccccccc}
1& 1& 1& 1& 1& 2 & 2 & 3 & 4\\
2 & 3 & 3 & 3 & \\
3 & 4 & 4 & 4 & \\
4. & & &  
\end{array} 
\]

\begin{cor}
The number $\rho_{m,p}$ of rectangular gapless shapes with $p$ rows
and $m$ columns is equal to,
$$\rho_{m,p}\ \ = \ \ \prod_{i=1}^p\frac{ \binom{m+2i-1}{i}}
{\binom{2i-1}{i}} 
= \prod_{i=1}^p \prod_{j=i}^p\frac{m+i+j-1}{i+j-1}.$$
\end{cor}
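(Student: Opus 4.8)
The plan is to chain the two bijections just established and then evaluate a single combinatorial sum. Combining the proposition relating gapless rectangles of size $p,m$ to $p$-branchings with entries at most $m+1$ with the proposition relating $p$-branchings with entries at most $M$ to semi-standard Young tableaux (SSYT) with entries in $\{1,\dots,p\}$ and fewer than $M$ columns (taking $M=m+1$), I obtain that $\rho_{m,p}$ equals the number of SSYT with entries in $\{1,\dots,p\}$ and at most $m$ columns. Equivalently, writing $s_\lambda(1^p)$ for the number of SSYT of shape $\lambda$ with entries in $\{1,\dots,p\}$, I have $\rho_{m,p}=\sum_{\lambda\subseteq(m^p)}s_\lambda(1^p)$, the sum ranging over partitions $\lambda$ with at most $p$ rows and first part at most $m$.

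Next I would turn this sum into a Vandermonde sum. By the Weyl dimension (hook-content) specialization, $s_\lambda(1^p)=\prod_{1\le i<j\le p}\frac{\lambda_i-\lambda_j+j-i}{j-i}$, with $\lambda$ padded by zeros to length $p$. Setting $\mu_i=\lambda_i+p-i$ turns $\lambda\subseteq(m^p)$ into a strictly decreasing sequence $m+p-1\ge\mu_1>\mu_2>\cdots>\mu_p\ge0$ and makes the numerator factor equal to $\mu_i-\mu_j$, so that
\be
\rho_{m,p}=\frac{1}{\prod_{1\le i<j\le p}(j-i)}\sum_{\mu}\ \prod_{1\le i<j\le p}(\mu_i-\mu_j),
\ee
the outer sum running over all $p$-element subsets $\{\mu_1,\dots,\mu_p\}$ of $\{0,1,\dots,m+p-1\}$.

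The main obstacle is evaluating this Vandermonde sum over $p$-subsets of a fixed interval in closed product form. I would handle it either by the standard evaluation of $\sum_{0\le x_1<\cdots<x_p\le N}\prod_{i<j}(x_j-x_i)$ as a product (by antisymmetrising and resumming, or via a determinant of binomial coefficients), with $N=m+p-1$; or, more conceptually, by noting that the objects being counted are exactly the symmetric plane partitions in a $p\times p\times m$ box, whose number is given by the MacMahon--Andrews--Macdonald formula $\prod_{1\le i\le j\le p}\frac{m+i+j-1}{i+j-1}$, the triangular range $i\le j$ reflecting the symmetry. Either route yields the claimed product.

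Finally I would reconcile the two product expressions in the statement, which is elementary. Both are products of factors of the form $\frac{m+s}{s}$: in the triangular form the pair $(i,j)$ contributes $\frac{m+s}{s}$ with $s=i+j-1$, while $\frac{\binom{m+2i-1}{i}}{\binom{2i-1}{i}}=\prod_{s=i}^{2i-1}\frac{m+s}{s}$. A short check that each value $s$ occurs with the same multiplicity in both presentations (as $i+j-1$ over $1\le i\le j\le p$, and as $\bigcup_{i}\{i,\dots,2i-1\}$) gives their equality, completing the proof.
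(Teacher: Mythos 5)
Your proposal takes essentially the same route as the paper: the paper's proof likewise chains the two bijections (gapless $p\times m$ rectangles $\to$ $p$-branchings with entries at most $m+1$ $\to$ semi-standard Young tableaux with entries in $\{1,\dots,p\}$ and at most $m$ columns) and then concludes by citing the classical enumeration of column-strict plane partitions from \cite{stanley71}. The only difference is that where the paper invokes the citation, you correctly sketch the classical evaluation yourself (Weyl dimension formula, the shift $\mu_i=\lambda_i+p-i$, and the Vandermonde sum, equivalently the symmetric-plane-partition count in a $p\times p\times m$ box), and you additionally verify the equality of the two displayed product forms, which the paper leaves implicit.
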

\begin{proof}
The number of such rectangles is by the above bijections equal to the
number of semi-standard Young tableaux with entries in $1,2,\ldots, p$
and having less than $m+1$ columns, which is also the number of
column-strict partitions (see \cite{stanley71}).
\end{proof}

\subsection{Decomposition of a triangular gapless shape}

For any integer $k \leq n$ triangular gapless shape of size $n$ can be
decomposed into a rectangle with $k$ rows (and $n-k$ columns) and two
triangles of sizes $k$ and $n-k$ as shown in
Figure~\ref{fig:tridecomp}.
\begin{center}
\begin{figure}[H]
\includegraphics{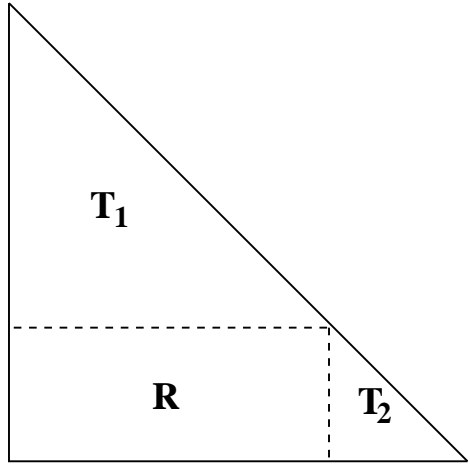} 
\caption{Decomposition of a gapless triangle} \label{fig:tridecomp}
\end{figure}
\end{center}
From this decompostion we get the following asymptotic estimate on the
number of gapless triangular shapes.
\begin{prop}\label{cor:lowerbound}
The number $a_n$ of  gapless triangular shapes of size
$n$ satisfies the inequality:
$$ a_{n} \geq \rho_{n-k,k} a_{n-k}$$
where  $\rho_{m,k}$ denotes the number of rectangular gapless shapes
with $k$ rows and $m$ columns.
\end{prop}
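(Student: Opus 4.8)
The plan is to establish the inequality $a_n \geq \rho_{n-k,k}\, a_{n-k}$ by exhibiting an injection from the set of pairs (gapless rectangle of size $k \times (n-k)$, gapless triangular shape of size $n-k$) into the set of gapless triangular shapes of size $n$. Since the number of such pairs is exactly $\rho_{n-k,k}\, a_{n-k}$ by definition, producing such an injection gives the claimed bound. The geometric content is precisely the decomposition pictured in Figure~\ref{fig:tridecomp}: a triangular shape of size $n$ is cut into a top triangle of size $k$, a rectangular block of $k$ rows and $n-k$ columns sitting below it, and a bottom triangle of size $n-k$.

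First I would make the decomposition explicit in coordinates. Writing $(s_{i,j})$ for a shape of size $n-1$ (with $1 \le j \le i \le n-1$), I would identify the three regions: the bottom triangle consists of entries $s_{i,j}$ with $i \le n-k-1$, the rectangle consists of the entries with $n-k \le i \le n-1$ and $j \le n-k$, and the top-right triangle consists of the entries with $n-k \le i \le n-1$ and $j > n-k$. The injection takes a pair (rectangle $R$, bottom triangle of size $n-k$) and fills in the rectangular and bottom-triangle regions accordingly, while filling the top-right triangle with a fixed gapless shape of size $k$ (for instance the all-ones triangle, corresponding to the identity-like shape from Proposition~\ref{prop:consecutivecolumns}). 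Since the top triangle is fixed, distinct pairs yield distinct full triangles, so the map is injective once I verify that every image is a legitimate gapless shape.

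The main obstacle is verifying the column compatibility condition of Proposition~\ref{prop:consecutivecolumns} \emph{across} the seams where the three regions meet, since within each region the condition holds by hypothesis. Concretely, for each $k<n$ the columns $f^{(k)}$ and $f^{(k+1)}$ must satisfy the prefix-sum inequality $\sum_{i=1}^{\ell} g_i \ge \sum_{i=1}^{\ell} h_i$. The interior inequalities come for free from the gaplessness of $R$ and of the two sub-triangles, so the work is to check the junctions: the interface between the bottom triangle and the rectangle, and between the rectangle and the top triangle. The cumulant description of the rectangle, $s_{i,j} \le s_{i,j+1}$, is exactly the prefix-sum inequality needed, and choosing the top triangle to be all ones makes its column sums maximal, which automatically dominates the rectangle's column sums at the upper seam; the lower seam reduces to the defining inequality of a gapless rectangle. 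I would check these two boundary cases carefully and note that the remaining inequalities are inherited, thereby confirming the image is a gapless triangular shape of size $n$ and completing the injection.
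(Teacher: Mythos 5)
Your proposal is correct and is essentially the paper's own proof: the paper likewise takes the decomposition of Figure~\ref{fig:tridecomp}, fills the rectangular region with $R$, places an arbitrary gapless triangular shape of size $n-k$ in the triangular region above it, and completes with the all-ones triangle of size $k$ on the right, leaving the seam verifications implicit where you spell them out via the prefix-sum condition of Proposition~\ref{prop:consecutivecolumns}. One small caution: your phrase ``for instance the all-ones triangle'' understates that this choice is essential rather than arbitrary (a fixed all-zeros triangle, say, would violate the dominance condition at the rectangle's right seam), but since your verification explicitly invokes the maximality of the all-ones column sums, the argument stands.
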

\begin{proof}
Given a rectangular gapless shape {\bf R}  with $k$ rows 
and $n-k$ columns, we build a triangular  gapless shape {\bf $T$} adding above
any triangular shape {\bf $T_1$} and on its right a triangular shape {\bf $T_2$} in which all entries are equal to 1.
\end{proof}

%Notice that the last row of a shape $(s_{i,j})$ of size $n$ of a
%gapless triangle (of size $n+1$) consists of a sequence of $p$
%entries equal to 0 followed by $n-p$ entries equal to 1 (where $0
%\leq p \leq n$). The row above it consists in a sequence of $q_1$
%entries equal to 0 followed by $p-q_1$ entries equal to 1 then $q_2$
%entries equal to 0 followed by $n-p-q_2-1$ entries equal to 1.

A lower bound for an asymptotic evaluation purpose may be obtained
when taking $k = \frac{n}{2}$.
\begin{cor}\label{cor:lowerbound2}
Consider the sequence of integers given by
$$ \alpha_1 = 1, \alpha_2 = 2, \alpha_3 = 6, \alpha_4 = 26$$ and
$$ \alpha_n = \rho_{\lceil \frac n2 \rceil, \lfloor \frac n2 \rfloor}
\alpha_{\lceil \frac n2 \rceil}.$$  
Then the number of gapless shapes of size $n$ is greater than $\alpha_n$.
\end{cor}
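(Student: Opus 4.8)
The plan is to obtain Corollary~\ref{cor:lowerbound2} as a direct specialization of Proposition~\ref{cor:lowerbound} by iterating the lower bound with the choice $k = \lfloor n/2 \rfloor$ and then checking that the recursively defined sequence $\alpha_n$ is dominated by the true count $a_n$ at every stage. First I would record the base cases: from Table~\ref{tab:bijasm} (equivalently, by direct enumeration of small gapless shapes) we have $a_1 = 1$, $a_2 = 2$, $a_3 = 6$, and $a_4 = 26$, which match $\alpha_1,\dots,\alpha_4$ exactly, so the asserted inequality $a_n \geq \alpha_n$ holds with equality in those cases and serves as the anchor for an induction.

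Next I would set up the inductive step. Proposition~\ref{cor:lowerbound} gives $a_n \geq \rho_{n-k,k}\, a_{n-k}$ for every $k \leq n$; specializing to $k = \lfloor n/2 \rfloor$, and noting that then $n-k = \lceil n/2 \rceil$, yields
\[
a_n \;\geq\; \rho_{\lceil n/2 \rceil,\, \lfloor n/2 \rfloor}\; a_{\lceil n/2 \rceil}.
\]
By the inductive hypothesis applied to the strictly smaller index $\lceil n/2 \rceil$ (which for $n \geq 5$ satisfies $\lceil n/2 \rceil < n$), we have $a_{\lceil n/2 \rceil} \geq \alpha_{\lceil n/2 \rceil}$. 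Since $\rho_{\lceil n/2 \rceil, \lfloor n/2 \rfloor}$ is a fixed positive quantity (given by the explicit product formula in the corollary to the rectangular enumeration), multiplying the hypothesis through by it preserves the inequality, giving
\[
a_n \;\geq\; \rho_{\lceil n/2 \rceil,\, \lfloor n/2 \rfloor}\, \alpha_{\lceil n/2 \rceil} \;=\; \alpha_n,
\]
where the last equality is exactly the defining recursion for $\alpha_n$. This closes the induction.

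The only subtlety worth flagging is a bookkeeping one rather than a genuine obstacle: I must confirm that the indices match up between the two statements. In Proposition~\ref{cor:lowerbound} the rectangle has $k$ rows and $n-k$ columns, so $\rho_{n-k,k}$ uses the convention $\rho_{\text{columns},\text{rows}}$; with $k = \lfloor n/2 \rfloor$ the columns number $\lceil n/2 \rceil$, which is consistent with the subscripts $\rho_{\lceil n/2 \rceil, \lfloor n/2 \rfloor}$ appearing in the recursion for $\alpha_n$. I would also verify that the induction is well-founded by observing that $\lceil n/2 \rceil$ decreases strictly for $n \geq 5$ and bottoms out at one of the four recorded base values, so every chain of recursive calls terminates. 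Given that both the base cases and the inductive step are immediate, the entire argument is short; the substance of the result lives in Proposition~\ref{cor:lowerbound} and the rectangular shape enumeration, both already established.
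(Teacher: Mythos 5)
Your proof is correct and is essentially the paper's own argument: the paper obtains this corollary by specializing Proposition~\ref{cor:lowerbound} to $k=\lfloor n/2\rfloor$ (so that $n-k=\lceil n/2\rceil$) and anchoring the recursion at the tabulated values $a_1,\dots,a_4$, exactly as in your induction. Your bookkeeping check of the $\rho$ subscript convention and the well-foundedness of the recursion are sound, and the only (harmless) discrepancy is that the statement's ``greater than'' should be read as ``$\geq$,'' since equality holds in the base cases---which your proof correctly delivers.
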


However, numerics  suggest that the true estimate is
\be \label{angrowth}
a_n \sim  \left( \frac{3^9}{2^{12}} \right)^{\frac{n^2}{8}}.
\ee
It would be interesting to derive this from a formula for $a_n$.

\section{Magog Triangles}\label{sec:magog}
We now prove a bijection between Magog triangles (repesenting totally
symmetric self-complementary plane partitions) which avoid a certain
pattern and 312 avoiding Gog words.

\begin{defn} \label{def:magog}
A {\em Magog triangle or fundamental domain for partitions}  of size $n$
is an array $(b_{ij})$
of positive integers defined for $n \geq i \geq j \geq 1$ that is written in
the form
\be
\begin{array}{c c c c}
b_{1,1} & \\
b_{2,1} & b_{2,2} & \\
\vdots & \vdots & \ddots &  \\
b_{n,1} & b_{n,2} & \cdots & b_{n,n},
\end{array}
\ee
where,
\begin{enumerate}
\item $b_{i,j} \leq n$,
\item $b_{i,j} \leq b_{i+1,j}$ and $b_{i,j} \leq b_{i,j+1}$ whenever
  both sides are defined (weak decreasing along both directions),
\item $b_{i,j} \geq i$.
\end{enumerate}
\end{defn}

\begin{defn} \label{def:spmagog}
  A Magog triangle $b$ contains {\em a gap at position
  $(i,j)$} if $b_{i+1,j}-b_{i,j}>1$.  A {\em gapless Magog triangle}
  is a Magog triangle which contains no gaps.
\end{defn}

Notice that the above condition is very similar to that defining
gapless monotone triangles in Definition~\ref{def:forb}. We showed in
Section~\ref{sec:312gwords} that Gog words which avoid the generalized
312 pattern are in bijection with gapless monotone triangles, but we have
not been able to find an interpretation for gapless Magog
triangles. Since Magog triangles were motivated by and are in
bijection with totally symmetric self-complementary plane partitions,
one could hope for a natural interpretation in that context.

Consider the following transformation $\Delta$ associating to any 
monotone triangle $(a_{ij})$ with integer coefficients a new triangle
 $(b_{ij})$ given by $$b_{i,j}\ \   = \ \ a_{i,j}+i-j$$

\begin{thm}
  For a monotone triangle $T = (a_{ij})$ the triangle $\Delta(T)$ is a
  Mogog triangle if and only if $T$ is gapless. Furthermore,
  $\Delta(T)$ is a gapless Magog triangle.
\end{thm}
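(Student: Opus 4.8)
The plan is to verify the three defining conditions of a Magog triangle for $\Delta(T)=(b_{ij})$ one at a time, keeping careful track of exactly which condition is the one that forces $T$ to be gapless. First I would record the elementary bounds $j \le a_{i,j} \le j+n-i$, valid in \emph{any} monotone triangle: the lower bound follows by iterating the column inequality $a_{i+1,j}\le a_{i,j}$ down to the bottom row $a_{n,j}=j$, and the upper bound by iterating the diagonal inequality $a_{i,j}\le a_{i+1,j+1}$ down to $a_{n,j+n-i}=j+n-i$. Substituting into $b_{i,j}=a_{i,j}+i-j$, these immediately yield the boundary requirements $b_{i,j}\ge i$ and $b_{i,j}\le n$ (Magog conditions (3) and (1) in Definition~\ref{def:magog}) for every monotone triangle, with no gaplessness needed. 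Positivity of the $b_{ij}$ is subsumed by $b_{i,j}\ge i\ge 1$.

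Next I would treat the two monotonicity conditions (2) by computing the first differences of $b$. Along a row, $b_{i,j+1}-b_{i,j}=a_{i,j+1}-a_{i,j}-1\ge 0$, since rows of a monotone triangle strictly increase; hence $b_{i,j}\le b_{i,j+1}$ holds unconditionally. Along a column, $b_{i+1,j}-b_{i,j}=a_{i+1,j}-a_{i,j}+1=1-(a_{i,j}-a_{i+1,j})$. This is the crux: because $a_{i,j}-a_{i+1,j}\ge 0$, the inequality $b_{i,j}\le b_{i+1,j}$ holds at $(i,j)$ if and only if $a_{i,j}-a_{i+1,j}\le 1$, which is precisely the absence of a gap at $(i,j)$ in the sense of Definition~\ref{def:forb}. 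Quantifying over all positions then gives the stated equivalence: $\Delta(T)$ is a Magog triangle if and only if $T$ contains no gaps, i.e.\ if and only if $T$ is gapless.

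Finally, the ``furthermore'' clause is already contained in the same column computation and requires no new work. Whenever $\Delta(T)$ is a Magog triangle, the Magog gap quantity at $(i,j)$ is $b_{i+1,j}-b_{i,j}=1-(a_{i,j}-a_{i+1,j})\le 1$, using $a_{i,j}\ge a_{i+1,j}$; hence $\Delta(T)$ never contains a gap in the sense of Definition~\ref{def:spmagog} and is automatically gapless.

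I do not anticipate a genuine obstacle: the whole argument is a single affine change of variables together with bookkeeping of discrete differences. The only point demanding care is the direction of the inequalities and the differing indexing conventions for a ``gap'' — it is defined via $a_{i,j}-a_{i+1,j}$ for Gog triangles but via $b_{i+1,j}-b_{i,j}$ for Magog triangles — so the main (minor) task is to confirm that conditions (1), (3) and the row part of (2) really do hold for an arbitrary monotone triangle, thereby isolating the column part of (2) as the sole carrier of the gapless hypothesis.
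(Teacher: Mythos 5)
Your proof is correct and complete: the iterated bounds $j \le a_{i,j} \le j+n-i$ give Magog conditions (1) and (3) for \emph{any} monotone triangle, the row difference $b_{i,j+1}-b_{i,j}=a_{i,j+1}-a_{i,j}-1\ge 0$ handles the row part of (2) unconditionally, and the column identity $b_{i+1,j}-b_{i,j}=1-(a_{i,j}-a_{i+1,j})$ correctly isolates the gapless hypothesis as equivalent to the column part of (2) while simultaneously showing $b_{i+1,j}-b_{i,j}\le 1$, which settles the ``furthermore'' clause. The paper itself states only that ``the proof is straightforward and left to the reader,'' so there is no argument to compare against; your write-up is precisely the intended routine verification, and your final identity is exactly the observation the paper records in the remark after the theorem, that a columnwise Gog difference of $\delta$ becomes a Magog difference of $1-\delta$.
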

\begin{proof}
  The proof is straightforward and left to the reader.
\end{proof}

We give an example to illustrate the idea. The gapless monotone triangle,
\be
\begin{array}{c c c c}
2 &  &  &  \\
2 & 3 &  &  \\
1 & 2 & 4 &   \\
1 & 2 & 3 & 4,
\end{array}
\ee
maps to the gapless Magog triangle,
\be
\begin{array}{c c c c c c c c}
2 &  &  &  \\
3 & 3 &  &  \\
3 & 3 & 4 &  \\
4 & 4 & 4 & 4.
\end{array}
\ee

Notice that whenever the difference between the entries at positions
$(i,j)$ and $(i+1,j)$ in the gapless monotone triangle was $\delta$
($\delta =0,1$ by definition), the difference between the entries at positions
$(i+1,j)$ and $(i,j)$ in the Magog triangle was $1-\delta$. This is an
obvious consequence of the bijection.

\section{312 patterns for permutations} \label{sec:312perms}

We will now show a permutation avoids the pattern 312 if and only if
the some monotone triangle associated to it is gapless. First, we
remind the reader of the definition of pattern avoidance for the
specific case of the pattern 312.

\begin{defn}
A permutation $\pi$ of the letters $\{1,\dots,n\}$ {\em contains the
pattern 312} if there exists positive integers $i,j,k$, with $1\leq
i<j<k \leq n$ such that $\pi_i>\pi_k> \pi_{j}$. If no such triple of
integers exist, the permutation $\pi$ is said to {\em avoid the pattern 312}.
\end{defn}

%One can show that the number of permutations of size $n$ that avoid
%the pattern 312 (and moreover any other permutation of size 3) is
%equal to the Catalan number.  

Before we demonstrate the bijection, we need a simple but useful observation.

\begin{rem} \label{rem:312}
If a permutation $\pi$ contains the pattern 312 at positions
$(i,j,k)$ and $j-i>1$, then there exists another integer $l$, $i \leq
l \leq j$ such that the permutation contains another pattern 312 at
$(l,l+1,k)$. The proof of this comes by looking at the integer
$\pi_{i+1}$. If $\pi_{i+1}<\pi_k$ then $(i,i+1,k)$ is the required
triple. If not, then $(i+1,j,k)$ is the new triple and we iterate.
\end{rem}

For example, the permutation $\pi={\underline 5}42 {\underline 1}6
{\underline 3}$  has a 312 pattern as shown. The iteration is as
follows,
\be
{\underline 5}42 {\underline 1}6 {\underline 3} \to 
5{\underline 4}2 {\underline 1}6{\underline 3} \to 
5{\underline 4} {\underline 2}16{\underline 3}.
\ee
We use the above observation  to refine the definition of containment of
the 312 pattern.
\begin{defn}
  We say that a permutation $\pi$ {\em contains the 312 pattern at
    position $i$} if $\pi$ contains a 312 pattern at positions
  $(i,i+1,k)$ for some $k>i+1$. 
\end{defn}

Remark~\ref{rem:312} leads to the following proposition.

\begin{lem} \label{lem:permconsec}
Assuming that a permutation $\pi=(\pi_1,\dots,\pi_n)$ does not contain
a 312 pattern at any positions less than $i$, it avoids the 312 pattern at
position $i$  if and only if the set $S_i=\{\pi_j| j \leq i+1, \pi_j \geq
\pi_{i+1} \}$ consists of consecutive integers.
\end{lem}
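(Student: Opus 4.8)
The plan is to prove the two directions of the biconditional separately, using the refined notion of containing a 312 pattern at position $i$ together with the running hypothesis that no 312 pattern occurs at any position below $i$. Let me write $S_i = \{\pi_j \mid j \leq i+1,\ \pi_j \geq \pi_{i+1}\}$; this is the set of values among the first $i+1$ entries that are at least the value in slot $i+1$. The key is to understand what it means geometrically: containing a 312 pattern at position $i$ means there is some $k > i+1$ with $\pi_i > \pi_k > \pi_{i+1}$, i.e.\ a later entry whose value sits strictly between $\pi_{i+1}$ and $\pi_i$.

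First I would prove the contrapositive of the ``only if'' direction: if $S_i$ is \emph{not} a set of consecutive integers, then $\pi$ contains a 312 pattern at position $i$. Since $S_i$ contains $\pi_{i+1}$ (its minimum) and is not an interval, there is an integer $v$ with $\pi_{i+1} < v$, $v \notin S_i$, but some element of $S_i$ exceeds $v$. Because $v \notin S_i$, the value $v$ does not appear among the first $i+1$ positions, so it appears at some later position $k > i+1$, giving $\pi_k = v$. Now I need a slot before $i+1$ whose value exceeds $v$; the natural candidate is position $i$ itself. The delicate point here, and the step I expect to be the main obstacle, is arranging that the ``3'' of the pattern lands exactly at position $i$ rather than merely somewhere before $i+1$. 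This is precisely where the hypothesis that $\pi$ avoids 312 at all smaller positions, combined with Remark~\ref{rem:312}, does the work: if the large value sat at some position $l < i$, one could slide the pattern leftward to produce a 312 at a position below $i$, contradicting the hypothesis. So under the running assumption the large entry must be $\pi_i$, yielding the pattern $(i,i+1,k)$.

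For the converse (``if'' direction), I would again argue contrapositively: suppose $\pi$ contains a 312 pattern at position $i$, so there is $k>i+1$ with $\pi_i > \pi_k > \pi_{i+1}$. Then $\pi_k$ is a value strictly between $\pi_{i+1}$ and $\pi_i$ that does \emph{not} occur among the first $i+1$ entries (it occurs at position $k$). But $\pi_{i+1} \in S_i$ and $\pi_i \in S_i$ (both are among the first $i+1$ positions and both are $\geq \pi_{i+1}$), while the intermediate value $\pi_k$ is absent from $S_i$. Hence $S_i$ contains two elements with a gap between them and fails to be consecutive. This direction is essentially immediate once the definitions are unwound.

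The heart of the argument is thus the interplay in the first direction between the \emph{local} refined condition (a pattern at exactly position $i$, i.e.\ with the middle index forced to $i+1$) and the \emph{global} absence of such patterns below $i$; Remark~\ref{rem:312} is the bridge that converts a 312 pattern whose first index is some $l<i$ into one at a strictly smaller position, so the no-pattern-below-$i$ hypothesis pins the ``3'' to slot $i$. I would state this reduction explicitly as the crux, then verify the membership bookkeeping for $S_i$ (that it always contains $\pi_{i+1}$ and $\pi_i$, and that values appearing only after position $i+1$ are exactly those missing from $S_i$) as the routine glue holding the two directions together.
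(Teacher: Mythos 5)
Your proof is correct, but in the hard direction it takes a genuinely different route from the paper's. The easy direction coincides with the paper's: a 312 pattern $(\pi_i,\pi_{i+1},\pi_k)$ at position $i$ exhibits the value $\pi_k$ as lying strictly between the members $\pi_{i+1}$ and $\pi_i$ of $S_i$ while being absent from it. For the other direction, the paper picks the \emph{smallest} value $\pi_k>\pi_{i+1}$ missing from $S_i$ and splits on its comparison with $\pi_i$: if $\pi_i<\pi_k$ it argues that $S_{i-1}$ is already non-consecutive, ``which is a contradiction'' --- a step that tacitly applies the lemma itself at position $i-1$, i.e.\ a hidden strong induction on $i$; the surviving case $\pi_i>\pi_k$ hands over the pattern directly. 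You instead take an arbitrary missing value $v=\pi_k$ together with any witness $\pi_l>v$, $l\le i$, observe that $(l,i+1,k)$ is a 312 pattern, and invoke the sliding iteration of Remark~\ref{rem:312} to replace it by a pattern $(m,m+1,k)$ with $l\le m\le i$; the hypothesis excludes $m<i$, forcing a pattern at position $i$. Your route buys an explicit, non-self-referential argument (the descent is absorbed into the already-established Remark~\ref{rem:312}), whereas the paper's is shorter but leaves the inductive appeal implicit. Two small inaccuracies in your prose, neither fatal: the slide of Remark~\ref{rem:312} moves the first index \emph{rightward} toward the middle index, not leftward; and it need not land strictly below $i$ --- it terminates at some $m\le i$, and $m=i$ is not a contradiction but precisely the desired pattern (equivalently, the iteration shows $\pi_i>\pi_k$, pinning the ``3'' to slot $i$), which is how your argument should be, and implicitly is, read.
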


\begin{proof}
  % The main idea is that a permutation $\pi$ contains the 312 pattern
  % at position $i$ if the difference between $\pi_i$ and $\pi_{i+1}$
  % is greater than one and moreover, there exists a $k$ such that
  % whenever $\pi_{i+1} < \pi_k < \pi_i$, then $k>i+1$. In plain
  % english, whenever $\pi_i$ is the ``3'' and $\pi_{i+1}$ is the
  % ``1'' of the 312 pattern, there must be a ``2'' which occurs after
  % the ``1''. 

Suppose there are no 312 patterns at positions less than $i$. The only
way to get a 312 pattern at position $i$ is to have
$\pi_{i}>\pi_{i+1}+1$ and have a $k$ such that if $\pi_{i+1} < \pi_k <
\pi_i$, then $k>i+1$. But if $S_i$ is consecutive, all such $k$'s will
necessarily satisfy $k<i$.

For the converse, assume that $S_i$ is not consecutive. Therefore,
there exists a smallest $k$ such that $\pi_k>\pi_{i+1}$ and $\pi_k
\notin S_i$. Now, if $\pi_i < \pi_k$, then $S_{i-1}$ is not
consecutive either, which is a contradiction. 
%\marginnote{Is this correct?}

Therefore, we must have $\pi_i > \pi_k$, in which case
$(\pi_i,\pi_{i+1},\pi_k)$ is the required 312 pattern.
\end{proof}

As a consequence of Lemma~\ref{lem:permconsec}, we need to only look
at the prefix of a permutation $\{\pi_j| j \leq i+1, \pi_j \geq
\pi_{i+1} \}$ to see if it contains a 312 pattern. Of course, to
ensure that the permutation avoids 312, we must look at the entire
permutation.

\begin{lem} \label{lem:perm312}
A permutation $\pi$ on $n$ letters contains the pattern 312 at
position $i$ (assuming no 312 pattern at positions less than $i$) if
and only if there exists a gap in the corresponding monotone triangle
$a$ at row $i$. 
\end{lem}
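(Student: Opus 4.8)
The plan is to reduce the statement to Lemma~\ref{lem:permconsec} by computing the column differences in row $i$ explicitly. By Lemma~\ref{lem:permconsec}, under the standing hypothesis that $\pi$ has no $312$ pattern at any position below $i$, the permutation contains a $312$ pattern at position $i$ if and only if the set $S_i=\{\pi_j : j\le i+1,\ \pi_j\ge \pi_{i+1}\}$ fails to consist of consecutive integers. Hence it suffices to prove that $S_i$ is non-consecutive if and only if row $i$ of the monotone triangle $a$ contains a gap.

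First I would describe the two relevant rows. Row $i$ is the ascending arrangement $a_{i,1}<\cdots<a_{i,i}$ of $\{\pi_1,\dots,\pi_i\}$, and row $i+1$ is obtained by inserting the new value $\pi_{i+1}$ into this sorted list. Let $m$ be the number of entries of row $i$ that are smaller than $\pi_{i+1}$, so that $a_{i,m}<\pi_{i+1}<a_{i,m+1}$ and $a_{i+1,m+1}=\pi_{i+1}$. Then the first $m$ entries of the two rows coincide, and to the right of the insertion point row $i+1$ is row $i$ shifted one step, i.e.\ $a_{i+1,j+1}=a_{i,j}$ for $j\ge m+1$. I would state this index shift cleanly before computing anything, since it is the one place where off-by-one errors can creep in.

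With this alignment the column differences at row $i$ are immediate: for $j\le m$ we have $a_{i,j}-a_{i+1,j}=0$; at $j=m+1$ we get $a_{i,m+1}-\pi_{i+1}$; and for $j\ge m+2$ we get $a_{i,j}-a_{i,j-1}$. The key observation is that these quantities, read in order $j=m+1,m+2,\dots,i$, are precisely the successive differences between consecutive elements of the sorted set
\[
S_i=\{\pi_{i+1}\}\cup\{a_{i,m+1},\dots,a_{i,i}\},\qquad \pi_{i+1}<a_{i,m+1}<\cdots<a_{i,i}.
\]
Since these are distinct integers in increasing order, each such difference is at least $1$, and a gap (a column difference strictly greater than $1$) can occur only at a column $j\ge m+1$. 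Therefore a gap exists in row $i$ if and only if two consecutive elements of $S_i$ differ by more than $1$, that is, if and only if $S_i$ is not a block of consecutive integers.

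Combining this equivalence with Lemma~\ref{lem:permconsec} yields the claim. The argument is essentially bookkeeping rather than a genuine obstacle; the only delicate point is keeping the two rows correctly aligned across the insertion of $\pi_{i+1}$, which the relation $a_{i+1,j+1}=a_{i,j}$ for $j\ge m+1$ handles once and for all.
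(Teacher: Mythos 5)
Your proof is correct and takes essentially the same approach as the paper's: both view row $i+1$ as row $i$ with $\pi_{i+1}$ inserted, identify the tail $\{a_{i+1,m+1},\dots,a_{i+1,i+1}\}$ with the set $S_i$ so that the column differences in columns $j\geq m+1$ are exactly the successive differences of the sorted $S_i$, and then invoke Lemma~\ref{lem:permconsec}. Your write-up is simply more explicit about the alignment $a_{i+1,j+1}=a_{i,j}$ for $j\geq m+1$, which the paper conveys only through its displayed diagram.
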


\begin{proof}
Consider rows $i,i+1$ for the monotone triangle $a$. Suppose
$a_{i,j}=\pi_{i+1}$  and look at columns starting $j$ because the
columns from 1 to $j-1$ will be unchanged. Since
$\pi_{i+1}$ is the only new entry in row $i+1$, all the entries in row
$i$ larger than it will be shifted to the right by one in column $i+1$
as shown,
\be
\begin{array}{c c c c c }
a_{i,j} & a_{i,j+1} & \dots &  a_{i,i} &\\
\hspace{1cm}\searrow  &  & \dots & \hspace{1cm}\searrow &  \\ 
\underbrace{a_{i+1,j}}_{\pi_{i+1}} & a_{i+1,j+1}
 & \dots & \dots & a_{i+1,i+1}.
\end{array}
\ee
Notice that the set $\{a_{i+1,j},\dots,a_{i+1,i+1}\}$ equals $S_i$. 
As a consequence of
Lemma~\ref{lem:permconsec}, it is clear that there will be no gaps if
and only if this set is consecutive.

\end{proof}

\begin{rem} \label{rem:double312}
If a permutation contains the pattern 312 at position $i$, then there
could be multiple columns where the gap occurs at row $i$. For
example, the permutation $3\underline{514}2$ contains
a 312 at the second position but the monotone triangle
\be 
\begin{array}{c c c c c}
3 &  &  &  &\\
\underline 3  & \underline 5 &  & &  \\
1 & 3 & 5 & &\\
1 & 3 & 4 & 5 &\\
1 & 2 & 3 & 4 &5,
\end{array}
\ee
contains two gaps in the second row.
\end{rem}

\section{Gog words} \label{sec:gogs}
We now define words which are enumerated by the ASM numbers and are in
natural bijection with alternating sign matrices. Before that, we need
an alphabet, the set of which itself depends on $n$.

\begin{defn} \label{def:alph}
The {\em Gog alphabet}, $\mathcal A$, is given by tuples 
$(p_1,q_1,\dots,$ $p_{k-1},q_{k-1},p_k)$ such that each $1 \leq p_1 < q_1
\dots p_{k-1} < q_{k-1} <p_k$.
\end{defn}

For example, $\mathcal A$ contains all the integers but in addition
contains elements such as $(1,2,3)$ and $(7,9,10,13,17)$. In all
examples, we will avoid commas between the entries in a tuple
throughout the article.

\begin{defn} \label{def:word}
A {\em Gog word} $w$ of size $n$ is a word consisting of elements of
$\mathcal A$ satisfying the following conditions:
\begin{enumerate}
\item The length of the word is $n$,
\item Each letter in the word has maximum entry at most $n$,
\item An integer in an even numbered position in a tuple is repeated
  in another tuple to its left and to its right in odd numbered
  positions,
\item Every repeated integer alternates in odd and even numbered
  positions in subsequent tuples. 
\end{enumerate}
\end{defn}

Notice that the first and last letter has to be of size one.
Clearly, Gog words with alphabets of size one correspond to
permutations.  We omit the brackets around singleton elements (i.e.,
we write $2$ instead of $(2)$) for ease of reading.  For example
$2(123)2$ and $213$ are both Gog words of size $3$ and
$25(12456)(345)(234)3$ is a Gog word of size $n=6$. 

A word about notation. We will denote elements of the alphabet
$\mathcal A$, 
also called tuples,
 by $x_i$, the lengths of these elements by $2k_i-1$, odd elements of
 these tuples by $p_j$ and even elements by $q_j$. Elements of the monotone
triangle will be denoted using the same subscripts, e.g. $a_{i,j}$.

\begin{rem} \label{rem:gogasms}
  There is a natural bijection relating Gog words and alternating sign
  matrices. The $j$th tuple of a Gog word is simply the positions, in
  increasing order, of nonzero entries in the $j$th row in the
  corresponding alternating sign matrix. The even numbered positions
  correpond to -1 and the odd numbered ones, to +1.
\end{rem}

\begin{rem} \label{rem:triangword}
There is a natural bijection relating Gog words and monotone
triangles. Given a Gog word $x_1 \dots x_n$, the first row of the
corresponding monotone triangle is simply the first element and the
$i$th row is obtained by taking the $i-1$th row, removing the
even-positioned elements in $x_i$ from it, appending the
odd-positioned elements in $x_i$ to it, and sorting in increasing
order.

Given a monotone triangle $a$, the first tuple is the singleton
element in the first row, and the $i$th tuple in the
corresponding Gog word is given by listing the sequence
\be
(a_{i,1}, a_{i-1,1}, \dots, a_{i,i-1}, a_{i-1,i-1}, a_{i,i})
\ee
and removing occurences of every repeated integer. 
\end{rem}

For an example, consider the monotone triangle,
\be \label{egasm}
\begin{array}{c c c c}
3& &  &  \\
1 & 3 &  &\\
1 & 2 & 4 &  \\
1 & 2 & 3 & 4.
\end{array}
\ee
We construct first the lists of sequences $(3)(133)(11234)(1122344)$
and then remove every occurence of repeated integers to get $(3)(1)(234)(3)$.
Both the bijections in Remark~\ref{rem:gogasms} and
Remark~\ref{rem:triangword} are commensurate with the well-known
bijection relating monotone triangles and ASMs \cite{mrr2}.

Notice that a Gog word is not a permutation of letters in the alphabet
$\mathcal A$, as the example $2(123)2$ shows.

%We will however show subsequently  that there is a natural analog of
%a 312 avoiding Gog word. 

\section{312 subpatterns for Gog words} \label{sec:312gwords}
We showed in Lemma~\ref{lem:perm312} that a 312 pattern in a
permutation is equivalent to a column-wise difference greater than one
in the corresponding monotone triangle.  A natural generalization
would be to interpret the
occurence of this columnwise difference in an arbitrary monotone
triangle as the occurence of a generalized  312 pattern in the
corresponding Gog words.  

For example, the monotone triangle in \eqref{egasm}
corresponds to the word $31(234)3$ and the fact that the $(1,1)$th
entry minus the $(1,2)$th entry is greater than one can be interpreted
as an occurence of a generalized 312 pattern. We now make this
notion precise.

\begin{defn}
We say that an integer $m$ is {\em active} with respect to a tuple
$x=(p_1,q_1,\dots,$ $p_{k-1},q_{k-1},p_k)$ if $m>p_k$ or if
$p_{j}<m<q_{j}$ for $j \in [k-1]$.
\end{defn}

\begin{defn} \label{def:gword}
  We say that integers $c,a,b$ form a {\em 312-subpattern} of the Gog word
  $w=x_1 x_2 \dots x_n$ if the following conditions hold
  \begin{enumerate}
  \item $c,a,b$ appear in odd positions in $x_i,x_j,x_k$ respectively
    where $i<j<k$,
  \item $b$ is not in an even position in $x_{i+1},\dots,x_{k-1}$,
  \item $b$ is active with respect to $x_j$.
  \item $a<b<c$.
  \end{enumerate}
\end{defn}

In the above example \eqref{egasm}, ${\underline 3}{\underline 1}({\underline
2}34)3$ contains the 312-subpattern.  The following two examples
illustrate the subtlety of the activity condition.  $$2 {\underline
5}({\underline 1}2356)5{\underline 4}2$$ contains a 312-subpattern but
$$2{\underline 5}({\underline 1}2456)5{\underline 3}2$$ does not.

This definition is stronger than the one used in \cite{jl}, whose
conditions, in this language, amount to $(1)$ and $(4)$ in
Definition~\ref{def:gword}. Therefore, the number of Gog words
avoiding the 312-subpattern is bounded {\em below} by the number of
alternating sign matrices avoiding the permutation matrix of the
permutation 312.

\begin{rem} \label{rem:312gog}
  If a Gog word $w=x_1 \dots x_n$ contains the 312-subpattern
  $(c,a,b)$ at positions $x_i,x_j,x_k$ whose lengths are
  $2p_i-1,2p_j-1$ and $2p_k-1$ respectively, then
  $((x_i)_{2p_i-1},(x_j)_1,b)$ is also a 312-subpattern since
  $(x_i)_{2p_i-1} \geq c$ and $(x_j)_1 \leq a$. Notice that the
  activity condition is unaffected.
\end{rem}

A natural question would be whether Remark~\ref{rem:312} generalizes
to Gog words. The following lemma shows that it does.

\begin{lem}
If a Gog word $w=x_1 \dots x_n$ contains the 312-subpattern, then there exist
positions $i,k$ such that the subpattern $(c,a,b)$ appears in
$x_i,x_{i+1},x_k$.
\end{lem}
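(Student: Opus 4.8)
The plan is to mimic the argument of Remark~\ref{rem:312}, which handled the permutation case, and to lean on Remark~\ref{rem:312gog} to normalize the ``first'' coordinate. Suppose $w = x_1 \dots x_n$ contains a $312$-subpattern $(c,a,b)$ at positions $x_i, x_j, x_k$ with $i < j < k$. If $j = i+1$ we are already done, so assume $j - i > 1$ and argue by showing we can always shift the first position rightward by one while preserving a valid $312$-subpattern with the same final position $k$. Iterating this shift decreases $j - i$ and terminates with $j = i + 1$, which is exactly what we want (after possibly relabeling the two left positions as $i, i+1$).

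First I would fix attention on the tuple $x_{i+1}$ and on the value $b$, which by hypothesis is active with respect to $x_j$ and does not sit in an even position of any of $x_{i+1}, \dots, x_{k-1}$. The key dichotomy mirrors the permutation proof: either $x_{i+1}$ contains an odd-position entry that can serve as a new ``$c$'' above $b$, in which case $(i+1, j, k)$ works with $c$ replaced by that entry; or it does not, in which case I want to promote $x_{i+1}$ itself into the role of the middle tuple, obtaining the triple $(i, i+1, k)$ directly. Concretely, I would examine the top odd entry $(x_{i+1})_{2p_{i+1}-1}$ of $x_{i+1}$. If $(x_{i+1})_{2p_{i+1}-1} > b$, then this entry together with $a$ (at $x_j$) and $b$ (at $x_k$) forms a $312$-subpattern at positions $(i+1, j, k)$: conditions $(1)$ and $(4)$ of Definition~\ref{def:gword} are immediate, and condition $(2)$ persists because we have only moved the left position rightward, so the interval $\{i+2, \dots, k-1\}$ over which $b$ must avoid even positions is a subset of the original forbidden interval. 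If instead $(x_{i+1})_{2p_{i+1}-1} \leq b$, then $x_{i+1}$ lies entirely below $b$ in its top odd entry, and I claim $(c, (x_{i+1})_1, b)$ is a $312$-subpattern at positions $(i, i+1, k)$, using Remark~\ref{rem:312gog} to take $(x_{i+1})_1 \leq a < b$ and the fact that $c$ at $x_i$ still dominates $b$.

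The step I expect to be the main obstacle is verifying the \emph{activity} condition $(3)$ when we change the middle position, since activity is defined relative to the structure of a specific tuple and is not automatically inherited. In the first branch, $b$ must be active with respect to $x_j$ — but $j$ is unchanged there, so activity is preserved for free; the real work is only in the second branch, where the middle tuple becomes $x_{i+1}$ and I must show $b$ is active with respect to $x_{i+1}$. Here I would use that $b$ does not appear in an even position of $x_{i+1}$ (hypothesis $(2)$) together with the structural constraints on tuples from Definitions~\ref{def:alph} and~\ref{def:word}: if $b$ fails to be active with respect to $x_{i+1}$, then $b$ lies in some closed gap $[q_t, p_{t+1}]$ of $x_{i+1}$ or below its bottom entry, and I would trace through what this forces on the monotone triangle columns — via the correspondence in Remark~\ref{rem:triangword} — to derive a contradiction with $b$ being active at the original $x_j$ while remaining absent from even positions in between. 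This is the delicate bookkeeping part, and I would handle it by reducing to the monotone-triangle picture where activity of $b$ translates into a columnwise statement about where $b$ can enter or leave between rows $i+1$ and $k$.

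Finally, to close the argument I would note that each application of the shift strictly decreases the quantity $j - i \geq 1$ while keeping $k$ and the triple valid, so after finitely many steps we reach $j = i + 1$; renaming the surviving left index as $i$ then yields positions $x_i, x_{i+1}, x_k$ carrying a $312$-subpattern, which is the assertion of the lemma.
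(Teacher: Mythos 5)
Your overall architecture is exactly the paper's: iterate a rightward shift of the left position, dichotomizing on the top odd entry $p=(x_{i+1})_{2p_{i+1}-1}$, with the first branch ($p>b$) handled just as in the paper's proof. The gap is in your second branch, at precisely the point you flag as the ``main obstacle.'' Your case split is $p>b$ versus $p\le b$, and at equality $p=b$ your claim is simply false: if $b$ equals the top entry $p_k$ of $x_{i+1}$, then $b$ is \emph{not} active with respect to $x_{i+1}$, since activity demands $m>p_k$ strictly or $p_t<m<q_t$, so $(c,(x_{i+1})_1,b)$ is not a 312-subpattern with middle tuple $x_{i+1}$. The equality case must be excluded, and the paper does so with a short argument you are missing: $b$ occupies an odd position in $x_k$, so if $b$ were also the (odd-position) top entry of $x_{i+1}$, the alternation rule (condition (4) of Definition~\ref{def:word}) would force $b$ into an even position strictly between $x_{i+1}$ and $x_k$, contradicting condition (2) of the subpattern hypothesis.

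Once $p=b$ is ruled out, the remaining worry you defer to ``delicate bookkeeping'' evaporates: since the entries of a tuple increase ($p_1<q_1<\dots<p_k$, Definition~\ref{def:alph}), $p$ is the maximum of $x_{i+1}$, so $b>p$ gives $b>p_k$ and activity of $b$ with respect to $x_{i+1}$ follows \emph{immediately} from the first clause of the activity definition --- this is the paper's one-line observation. Your sketched detour through monotone-triangle columns via Remark~\ref{rem:triangword} is moreover misdirected: under your branch hypothesis $b\ge p$, the value $b$ cannot lie in a closed gap $[q_t,p_{t+1}]$ of $x_{i+1}$ nor below its bottom entry, so the contradiction you plan to extract does not exist there; the only possible failure of activity is $b=p_k$, the case your split failed to isolate. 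Since you left this step as a plan rather than a proof, the argument is incomplete at its crux; with the two observations above inserted, your iteration and termination reasoning goes through as in the paper. (A minor difference: in the second branch the paper takes the middle value to be $p$ itself rather than $(x_{i+1})_1$; either choice works once $p<b$ is strict.)
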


\begin{proof}
The idea of the proof is very similar to that in
Remark~\ref{rem:312}. Suppose that the 312-subpattern in $w$ occurs at
$x_i,x_j,x_k$ respectively. It follows from Remark~\ref{rem:312gog}
that we can take $c=(x_i)_{2p_i-1}$ and $a=(x_j)_1$. Now look at the
last entry in $x_{i+1}$, $p=(x_{i+1})_{2p_{i+1}-1}$. 

First, note that $p$ cannot be equal to $b$ because $b$ is present in
an odd position in $x_k$ and therefore, that would force $b$ to be in
an even position between $x_{i+1}$ and $x_k$, which is forbidden in
the definition of the 312-subpattern.

If $p>b$, then $(p,a,b)$ is a 312-subpattern and we iterate. If $p<b$,
then $(c,p,b)$ is a possible 312-subpattern if we can show that $b$ is
active with respect to $x_{i+1}$. But this is guaranteed since $b$ is
larger than $p$, which is the largest element in $x_{i+1}$. This
proves the lemma.
\end{proof}

We will now generalize Lemma~\ref{lem:permconsec} from permutations to Gog
words.

\begin{lem} \label{lem:gogconsec}
Let $w=x_1 \dots x_n$ be a Gog word and $a$ be the corresponding
monotone triangle. Let $x_{i+1} = (p_1,q_1,\dots,p_{k-1},q_{k-1},p_k)$.
Define
\be
\begin{split}
&P_l = \{ a_{i,j} | q_l \geq a_{i,j} > p_l \}, \quad \text{for $l<k$ and} \\
&P_k = \{ a_{i,j} | a_{i,j} \geq p_k \}.
\end{split}
\ee
Then there is no gap at row $i$ in $a$ if and only if $P_l \cup
\{p_l\}$ for $l< [k]$ and $P_k$ consist of consecutive integers.
\end{lem}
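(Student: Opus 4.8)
The plan is to generalize the proof of Lemma~\ref{lem:permconsec} by examining how the monotone triangle is built from row $i$ to row $i+1$ via the bijection in Remark~\ref{rem:triangword}. The key structural fact is that passing from row $i$ to row $i+1$ consists of removing the even-positioned entries $q_1,\dots,q_{k-1}$ of $x_{i+1}$ from row $i$ and inserting the odd-positioned entries $p_1,\dots,p_k$, then sorting. I would first set up notation making the columnwise gap condition at row $i$ explicit: a gap occurs precisely when some entry $a_{i,j}$ and the corresponding $a_{i+1,j}$ directly below it differ by more than $1$. Since row $i+1$ is obtained from row $i$ by these deletions and insertions, the entries of row $i+1$ that lie below (or displace) a block of row-$i$ entries are controlled by which insertions/deletions fall into each integer interval.

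The main idea I would pursue is that each active integer $p_l$ (for $l<k$) together with the set $P_l$ of row-$i$ entries lying in the half-open interval $(p_l,q_l]$, and the set $P_k$ of row-$i$ entries at least $p_k$, exactly describe the ``local reshuffling'' of columns in the neighborhood of each inserted odd entry. Concretely, when $p_l$ is inserted and $q_l$ is removed, all the row-$i$ entries strictly between $p_l$ and $q_l$ get shifted by one column, while entries outside these intervals keep their column position. I would argue that a gap appears in exactly those columns where a shift by one happens but the inserted value does not ``fill in'' the integer that was vacated — and this is controlled precisely by whether $P_l\cup\{p_l\}$ is a set of consecutive integers. The gapless-to-permutation special case (Lemma~\ref{lem:perm312}, where $k=1$ and $x_{i+1}$ is the singleton $p_1=\pi_{i+1}$, so $P_1=S_i$) should drop out as the $k=1$ instance, which is a useful sanity check to structure the argument around.

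I would then handle the two directions. For the ``if'' direction, assuming each $P_l\cup\{p_l\}$ and $P_k$ is consecutive, I would show that in each affected column the difference $a_{i,j}-a_{i+1,j}$ is forced to be $0$ or $1$: consecutiveness means the inserted value $p_l$ plus the block $P_l$ forms an unbroken run of integers, so the sorted insertion shifts values by exactly one without opening a gap larger than one. For the ``only if'' direction, I would take the contrapositive: if some $P_l\cup\{p_l\}$ (or $P_k$) is not consecutive, there is a ``hole'' in the run, and I would locate the specific column where this hole forces $a_{i,j}-a_{i+1,j}\geq 2$, producing a gap. The careful bookkeeping of column indices across the deletion/insertion — tracking exactly how far each entry is displaced and which one sits directly below a given top entry — is where I expect the real work to be.

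The hard part will be managing the simultaneous bookkeeping when $k>1$, that is, when several insertions and deletions interact. In the permutation case there is a single insertion and a single monotone run $S_i$, so the analysis is clean; here the intervals $(p_l,q_l]$ partition the relevant range, and I must verify that the activity condition built into the Gog alphabet (namely $p_l<q_l<p_{l+1}$) guarantees these intervals are disjoint and ordered, so that the local analyses decouple and can be combined into a single global statement about all columns at once. The subtlety is ensuring that the boundary entries $q_l$ (which get deleted) and the first element of the next block do not create spurious gaps at the interval seams; I would need to check that the definition of $P_k$ as $\{a_{i,j}\mid a_{i,j}\geq p_k\}$ correctly absorbs the top of the triangle where no further deletion occurs, so that the consecutiveness of $P_k$ alone governs the gaps in the uppermost columns.
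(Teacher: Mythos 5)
Your plan is correct and is essentially the paper's own proof: the paper likewise builds row $i+1$ from row $i$ by deleting the even-positioned entries $q_l$ and inserting the odd-positioned entries $p_l$, identifies each $P_l$ with the block of row-$i$ entries shifted by one column between the insertion point of $p_l$ and the deletion point of $q_l$, and handles the top block $P_k$ exactly as in the proof of Lemma~\ref{lem:perm312}. The column bookkeeping you flag as the hard part is settled in the paper by a single short observation — if $p_l = a_{i+1,r}$, $q_l = a_{i,s}$ and $p_{l+1} = a_{i+1,t}$, then $r \leq s < t$, proved directly from the row, column and diagonal monotonicity conditions — after which the blocks decouple and the blockwise consecutiveness criterion follows just as you predict.
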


\begin{proof}
We begin with an innocuous observation.  Let $w=x_1 \dots x_n$ be a
Gog word of size $n$ and $a$ be the corresponding monotone triangle.
Let $x_j = (p_1,q_1,\dots,p_{k-1},q_{k-1},p_k)$. For any $i$, if
$p_i = a_{i,r}, q_i = a_{i-1,s}$ and $p_{i+1} = a_{i,t}$, then $r
\leq s < t$.

This is simply a consequence of the definition of a monotone
triangle. Suppose $s<r$, and part of the monotone triangle looks
like
\be
\begin{array}{c c c c}
\underbrace{a_{i-1,s}}_{q_i} & \dots & a_{i-1,r-1} & a_{i-1,r} \\
% &  &  & \searrow & \dots & \searrow &\\
a_{i,s} & \dots & a_{i,r-1} & \underbrace{a_{i,r}}_{p_i} 
\end{array}
\ee
Then $q_i \leq a_{i-1,r-1} \leq p_i$, which is a
contradiction. Similarly, suppose $t \leq s$, from which part of the
monotone triangle looks like
\be
\begin{array}{c c c}
a_{i-1,t} &\dots &  \underbrace{a_{i-1,s}}_{q_i}\\
% &  &  & \searrow & \dots & \searrow &\\
 \underbrace{a_{i,t}}_{p_{i+1}} & \dots & a_{i,s}
\end{array}
\ee
which implied $p_{i+1} \leq a_{i-1,t} \leq q_i$, which is again a
contradiction.

From this it follows that the set of elements $\{a_{i-1,j}|r \leq j <s
\}$ is exactly the same as the set $P_i$,
\be
\begin{array}{c c c c c}
a_{i-1,r} & a_{i-1,r+1} &\dots &  a_{i-1,s-1} &q_i\\
  \hspace{1cm}\searrow & \dots & \dots &\hspace{1cm}\searrow \\
p_{i} & a_{i,r+1} &\dots & a_{i,s-1} & a_{i,s}
\end{array}
\ee
because none of the elements in $P_i$ is
removed. It is now clear that if the elements in $P_i$ are
consecutive, then there cannot be a gap between columns $r$ and $s$. 
This argument works for all $i<k$. For $i=k$, the argument is
identical to that in the proof of Lemma~\ref{lem:perm312}.
\end{proof}

We are now in a position to generalize Lemma~\ref{lem:perm312} to
Gog words.

\begin{thm} \label{thm:312}
Assuming that a Gog word $w$ of size $n$  does not contain the
312-subpattern at positions less than $i$, it
contains the 312-subpattern at position
$i$ if and only if  there exists a gap in the
corresponding monotone triangle $a$ at row $i$.
\end{thm}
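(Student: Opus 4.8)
The plan is to lift the permutation argument of Lemma~\ref{lem:perm312} to the Gog-word setting by translating the combinatorial content of the 312-subpattern (Definition~\ref{def:gword}) directly into the consecutivity condition of Lemma~\ref{lem:gogconsec}. The key bridge is the observation in Remark~\ref{rem:312gog}: whenever the word contains a 312-subpattern $(c,a,b)$ at positions $x_i, x_j, x_k$, we may assume $c = (x_i)_{2p_i-1}$ is the \emph{last} (largest odd-position) entry of $x_i$ and $a = (x_j)_1$ is the \emph{first} entry of $x_j$. I would then use the lemma just above (generalizing Remark~\ref{rem:312}) to normalize to the consecutive case $j = i+1$, so that throughout the argument the witnessing triple lives in rows $x_i, x_{i+1}, x_k$ of the word. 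This reduces the theorem to comparing a single pair of consecutive rows $i, i+1$ of the monotone triangle, exactly as in the permutation case.

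\textbf{The forward direction (subpattern $\Rightarrow$ gap).} First I would set $x_{i+1} = (p_1, q_1, \dots, p_{k-1}, q_{k-1}, p_k)$ and invoke Lemma~\ref{lem:gogconsec}: a gap at row $i$ is equivalent to the failure of consecutivity of one of the sets $P_l \cup \{p_l\}$ (for $l < k$) or $P_k$. Suppose $w$ contains the 312-subpattern at position $i$, witnessed by $(c, a, b)$ with $c = a_{i, r}$ (in row $i$) and $b$ appearing in an odd position of $x_k$ but \emph{not} in an even position of any intermediate tuple. The activity condition (3) on $b$ with respect to $x_{i+1}$ forces $b$ to lie either above $p_k$ or strictly between some $p_l$ and $q_l$; in either case $b$ belongs to the relevant block ($P_k$ or $P_l \cup \{p_l\}$). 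Since $a < b < c$ and $c$ lies in that same block while some value between $b$ and $c$ is skipped — because $b$ survives as an active odd entry without being consumed — I would argue that the block cannot be an interval of consecutive integers, yielding a gap by Lemma~\ref{lem:gogconsec}.

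\textbf{The converse (gap $\Rightarrow$ subpattern).} Conversely, assume a gap exists at row $i$. By Lemma~\ref{lem:gogconsec} some block, say $P_l \cup \{p_l\}$ or $P_k$, is non-consecutive, so there is a smallest integer $m$ exceeding the block's minimum that is missing from it while a larger element $c$ is present. I would take $c$ to be that larger present element (an odd-position entry of some earlier tuple, traceable through the monotone-triangle structure as in Lemma~\ref{lem:gogconsec}), $b = m$ the missing value, and $a$ the block's smallest element. The hypothesis that $w$ has no 312-subpattern at positions \emph{less than} $i$ is what guarantees that $b$ does not get absorbed into an even position before row $k$ where it reappears as an active odd entry, so that conditions (1)--(4) of Definition~\ref{def:gword} are all met. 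This exactly mirrors the ``$S_{i-1}$ not consecutive is a contradiction'' step in the proof of Lemma~\ref{lem:permconsec}.

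\textbf{Expected main obstacle.} The routine inequalities transfer cleanly; the delicate point is the activity condition (3), which has no analogue in the permutation setting where every letter is a singleton. I expect the bookkeeping around \emph{which} block ($P_l \cup \{p_l\}$ versus $P_k$) the witness $b$ falls into, and verifying that $b$'s activity with respect to $x_{i+1}$ is precisely equivalent to $b$ landing inside a gap-producing block, to be the crux. The two contrasting examples in the excerpt, namely $2\,5(12356)5\,4\,2$ containing a subpattern while $2\,5(12456)5\,3\,2$ does not, show that the activity condition is exactly the feature distinguishing a genuine gap from a spurious descent, so the argument must hinge on reading off activity from the consecutivity structure of the monotone triangle rather than on the coarser conditions (1) and (4) alone.
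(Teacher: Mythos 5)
Your overall skeleton coincides with the paper's (normalize to consecutive tuples via Remark~\ref{rem:312gog} and the lemma preceding Lemma~\ref{lem:gogconsec}, then translate ``gap at row $i$'' into block non-consecutivity via Lemma~\ref{lem:gogconsec}), but both directions have concrete holes. In the forward direction you claim ``$c$ lies in that same block'' as $b$; this fails when activity places $b$ strictly between $p_l$ and $q_l$ but $c>q_l$, for then $c\notin P_l\cup\{p_l\}$. The correct witness does not involve $c$ at all: one must first prove that $b$ is absent from row $i$ of the triangle --- if $b$ were present in row $i$, it would have to occupy an even position in one of $x_{i+1},\dots,x_{k-1}$ in order to reappear in an odd position in $x_k$, contradicting condition (2) of Definition~\ref{def:gword} (this is why the paper first rules out $b$ being any $p_l$ or $q_l$). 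Then $q_l$, which is removed at step $i+1$ and hence sits in row $i$, lies in $P_l$, and $p_l<b<q_l$ with $b$ missing breaks consecutivity. You gesture at this (``$b$ survives \dots\ without being consumed'') but never state the key fact $b\notin\{a_{i,1},\dots,a_{i,i}\}$, and your stated witness is wrong.

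The converse is where the genuine gap lies. Condition (1) forces $c$ to appear in an odd position of $x_i$ \emph{itself} for the subpattern to be at position $i$; your $c$, ``an odd-position entry of some earlier tuple,'' typically entered before row $i$, so your triple at best witnesses a subpattern at some position $i'<i$, not at $i$. Likewise your $a$ (the block's smallest element) need not occur in an odd position of $x_{i+1}$ at all in the $P_k$ case; the paper takes $a=p_1$. The paper's construction is: pick the gap column with $x$ in row $i$ above $z$ in row $i+1$, $x-z>1$; pick $b$ strictly between them (so $b$ is automatically active); let $x_m$ be the \emph{first} tuple after $x_{i+1}$ in which $b$ occupies an odd position --- minimality, plus $b$'s absence from rows $i+1,\dots,m-1$, already yields condition (2), so your claim that the no-earlier-subpattern hypothesis is what prevents $b$'s ``absorption'' into an even position is a misattribution. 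The hypothesis is in fact used for the step you omit entirely: with $c$ the largest entry of $x_i$, one must prove $b<c$, and the paper does so by showing $c$'s column in row $i$ cannot lie strictly to the left of the gap column, since otherwise the pair $x,z$ would produce a gap between rows $i-1$ and $i$, contradicting (via the theorem applied inductively) the assumption that $w$ has no 312-subpattern at positions less than $i$. Without this column argument the triple $(c,a,b)$ at position $i$ is not justified, so as written the converse does not go through.
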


\begin{proof} 
Combining what we have already shown, it remains to show, using the
notation of Lemma~\ref{lem:gogconsec} that the
existence of a 312 pattern of row $i$ is equivalent to the existence
of a $P_l$ which is not consecutive.

Suppose the 312-subpattern is obtained by the triple of integers
$(c,a,b)$. We first show that $b$ cannot be any of the $p_l$'s or
$q_l$'s. Clearly $b$ cannot be equal to any $q_l$ because then it is
in an even position in $x_{i+1}$, which is forbidden by
definition. $b$ cannot also be equal to any $p_l$ because it will be
forced to be in an even position strictly between $x_{i+1}$ and $x_k$
as a consequence of being a Gog word, which is forbidden for the same reason.

If $p_k > b$, $P_k$ is not consecutive and we are done. Otherwise, by
the activity condition, there exists $l$ such that $p_l \leq b \leq
q_l$ and then $P_l$ is not consecutive. As a consequence of
Lemma~\ref{lem:gogconsec}, we have a gap at row $i$.

For the converse, suppose there is a gap at row $i$. By
Lemma~\ref{lem:gogconsec}, either $P_l$ for $l<k$ or $P_k$ is not
consecutive. In either case, there exists an
integer $x$ in row $i$ and an integer $z$ in row $i+1$ in the same
column as $x$ such that $x-z>1$. Choose an integer $b$ such that $x<b<z$. By
choice, $b$ is active with respect to $x_{i+1}$. Let $x_m$ be the
first tuple in which $b$ appears in an odd position after
$x_{i+1}$. By construction, $b$ is not in an even position in tuples
between $x_{i+1}$ and $x_m$. We set $a=p_1$ and $c$ to be the largest
integer in $x_i$. Clearly $a<b$ as $a<x$. 

All that remains to be done is to prove $b<c$. We will show that by
demonstrating that the column in which $c$ belongs cannot be to the
left of that of $x$ and $z$. If that happens, rows $i-1,i$ and $i+1$
will look as follows,
\be
\begin{array}{c c c c c}
* & \dots &  x & * & \dots\\
%   &  &\hspace{1cm}\searrow & \hspace{1cm}\searrow & \dots\\
c  &\dots &  z  & x & \dots\\
%   & \dots &\hspace{1cm}\searrow & \dots\\
* & \dots & * & z & \dots,
\end{array}
\ee
and as shown, there will be a gap between rows $i-1$ and $i$ by the
same $x$ and $z$, which is forbidden by assumption.
Therefore $c$ cannot be in a column to the left of that of $x$ and
$z$. It can, of course, be in the same column (i.e. $x=c$), or to the right. In
both cases $b<c$ and thus, $(c,a,b)$ is the required 312 pattern.
\end{proof}

\section{Conclusions} \label{sec:conc}
We have defined a new set of words on an infinite alphabet which are
equinumerous with alternating sign matrices. Further, we showed that
there is a notion of 312-avoidance which has a natural interpretation
in terms of both monotone triangles (columnwise difference less than
two) and in terms of ASMs (natural bijection with TSSCPPs). However, a
number of questions remain unanswered.

First, we would like to understand better how the number of monotone
triangles in bijection with the subset of magogs grows with the
size. We have shown that ASMs grow superexponentially in size
\eqref{asmasym} (see also \cite{blefo}), with entropy $\lambda_1
\approx 1.29904$ \eqref{asmentr}. We have been
able to show that the number of ASMs in bijection grow like
\eqref{pbranasym}, with entropy $\lambda_2 \approx 1.13975$
\eqref{pbranentr}.  For some strange reason, we find that
$\lambda_1=\lambda_2^2$.  What we conjecture is that the entropy is
actually, using \eqref{angrowth}, given by $\ds
\frac{3^{9/8}}{2^{3/2}} \approx 1.21679$.

Secondly, we observe that we do not know if there is a meaningful
interpretation of this bijection on the TSSCPP side. If one believes
that this bijection is not artificial, then one would expect some
structure on TSSCPPs. We have not been able to find this structure so
far.

Finally, one would be interested in exploiting other
pattern avoidances on Gog words. The definition of 312-subpatterns
uses a notion of activity that seems somehow specific to the
subpattern 312. One would like a more holistic definition, which
encompasses other subpatterns.

\section*{Acknowledgements}
We thanks P. Duchon, O. Guibert and X. Viennot for
discussions, Florent Le Gac for collaborating on some of the results
in Section~\ref{sec:rect}, and J. Propp for informing us about the work of
H. Canary.

\appendix

\section{Gapless monotone triangles} \label{sec:ngaplessasm}
A lower bound for the  number of gapless monotone triangles is given
by  the sequence $\alpha_n$ defined by:

\be 
\alpha_1 = 1, \alpha_2 = 2, \alpha_3 = 6, \alpha_4 = 26, \,
\alpha_n  =
\rho_{\lceil\frac{n}{2}\rceil,\lfloor\frac{n}{2}\rfloor}\,
\alpha_{\lceil\frac{n}{2}\rceil},
\ee 
\noindent
where $\rho_{m,p}$ is the number of rectangular gapless shapes with
$p$ rows and $m$ columns given by
\be
\rho_{m,p}\ \ =
\ \ \prod_{i=1}^{p-1}\left(\frac{m+i}{i}\right)^{\lceil\frac{i}{2}\rceil}
\prod_{i=1}^{p}\left(\frac{m+2p-i}{2p-i}\right)^{\lceil\frac{i}{2}\rceil}.
\ee 

In order to determine the asymptotic behaviour of 
$\alpha_n$ it  is convenient to start with   the  formula   
giving $\rho_{2n,2n}$ which is
\be
\rho_{2n,2n}\ \ = \ \ \prod_{i=1}^{2n-1} \left( \frac{2n+i}{i} 
\right)^{\lceil\frac{i}{2}\rceil}
\prod_{i=1}^{2n}\left(\frac{6n-i}{4n-i}\right)^{\lceil\frac{i}{2}\rceil}.
\ee
Separating the case  $i=2j$ from  $i=2j-1$ we get:
\be
\begin{split}
\rho_{2n,2n}\ \ =  \prod_{j=1}^{n-1} &\left(\frac{2n+2j}{2j}\right)^{j}
\  \prod_{j=1}^{n}\left(\frac{2n+2j-1}{2j-1}\right)^{j} \\
 & \times\prod_{j=1}^{n}\left(\frac{6n-2j}{4n-2j}\right)^j
\ \prod_{j=1}^{n}\left(\frac{6n-2j+1}{4n-2j+1}\right)^j
\end{split}
\ee
Taking the logarithms we have:
\be
\begin{split}
&\log (\rho_{2n,2n}) =\sum_{1\le j < n} j\log(2n+2j) 
-\sum_{1\le j< n} j\log(2j) \\
&+ \sum_{1\le j \le n}
j\left( \log(2n+2j -1)+\log(6n-2j)+\log(6n-2j+1)\right) \\
& -  \sum_{1\le j \le n}
j\left(\log(2j-1)+\log(4n-2j)+\log(4n-2j+1)\right).
\end{split}
\ee
Let  $f(x)$ be   the function
\be
\begin{split}
f(x) &= x\left( \log(2n+2x)+\log(2n+2x-1)
+\log(6n-2x) \right. \\ 
&\left. +\log(6n-2x+1)\right) 
 -x\left( \log(2x)+\log(2x-1) \right.\\
&\left. +\log(4n-2x) +\log(4n-2x+1)\right),
\end{split}
\ee
which  relates to $\rho_{2n,2n}$ by the formula
\be
\log(\rho_{2n,2n}) \ \ =\ \ \sum_{j=1}^n f(j) - n\log(2) \ \ =\ \  S_n
- n\log(2)
\ee
The next step consists in approximating the  sum  $S_{n} =\sum_{j=1}^n
f(i)$ using the Euler-Maclaurin formula,
\be
S_n= \frac{f(1)+f(n)}{2}+ I_n +\sum_{k=2}^{\infty}\frac{B_k}{k!}\left(
f^{(k-1)}(n)-f^{(k-1)}(1)\right),
\ee
where $I_{n}$ is the integral
 \be I_n \ = \  \int_{1}^{n} f(x) dx\ee
and $B_k$ are the Bernoulli numbers ($B_1=-1/2$, $B_2=1/6$, $B_3=0$, 
$B_4=-1/30$, $\cdots$).

We can evaluate $I_n$ using the formula:
\be 
\int  x\,\log(ax+b) dx \ \ \ = \ \ \ 
\frac{x^2}{2} \log(ax+b) - \frac{x^2}{4} 
 - \frac{bx}{2a}  - \frac{b^2}{2a} \log(ax+b),
\ee
giving
\be
I_n\ = \ A(n)  n^2 + B(n) n + C(n),
\ee
where
\be
\begin{split}
A(n) &=-4\log(2)-3\log(n)+\frac{9}{2}\log(6n-1)
-4\log(4n+1) \\
&-2\log(4n-1)+\frac{9}{2}\log(3n-1)\\
&+ 2\log(2n+1)-\frac{5}{2}\log(2n-1)
+\frac{1}{2}\log(n+1),\\
B(n)&=\frac{3}{2}\log(6n-1)
-\frac{3}{2}\log(4n+1)-\frac{1}{2}\log(4n-1) 
+ \frac{1}{2}\log(2n+1),\\
C(n)&=-\frac{3}{8}\log(6n-1)
-\frac{1}{8}\log(4n+1) 
+\frac{1}{4}\log(4n-1) \\
&-\frac{1}{2}\log(3n-1)
  -\frac{1}{4}\log(2n+1)
+\frac{5}{8}\log(2n-1)-\frac{1}{2}\log(n+1).
\end{split}
\ee
Using the approximation formula
\be
\log(an+b) \ = \ \log(a)+\log(n)+
\frac{b}{an}-\frac{b^2}{2a^2n^2}+O(1/n^3),
\ee  
we obtain 
the following approximations when ${n\rightarrow\infty}$
\be
\begin{split}
I_n+\frac{f(1)+f(n)}{2}-n\log(2))
&= n^2(9\log(3) -12\log(2)) \\
&+ n (\frac{3}{2}\log(3) -\log(2)) +\frac{1}{8}\log(n) +O(1),\\
\frac{B_2}{2!}\left( f'(n)-f'(1)
\right) &=-\frac{1}{6}\log(n)+O(1), \\
 f^{(3)}(1)-f^{(3)}(0) &= O(1),
\end{split}
\ee
giving
\be
\begin{split}
\log(\rho_{2n,2n}) =&  n^2(9\log(3) -12\log(2)) \\
&+ n (\frac{3}{2}\log(3) -\log(2)) -\frac{1}{24}\log(n) +O(1),\\
\rho_{2n,2n}=&\gamma \left( 
\frac{3^{9n^2+\frac{3}{2}n}}{2^{12n^2+n}}
\right)n^{-1/24}\left( 1+O(1/\log(n))\right),
\end{split}
\ee
where  $\gamma$ is a constant. Hence
\begin{eqnarray*}
\rho_{n,n}&=&\gamma\,2^{-1/24}\left( 
\frac{3^{9n^2+3n}}{2^{12n^2+2n}}
\right)^{1/4}n^{-1/24}\left( 1+O(1/\log(n))\right)
\end{eqnarray*}
Since we have
\be\alpha_n=\rho_{n/2,n/2}\alpha_{n/2} ,\ee
we obtain:
\be  \label{pbranasym}
\alpha_n = \beta \left( \frac{3^{3/4}}{2}
\right)^{n^2}\left( \frac{3^{3/4}}{2^{1/2}}\right)^{n}
n^{-1/24}\left( 1+O(1/\log(n))\right) 
\ee
where  $\beta$ is a constant.

\section{Alternating signs matrices} \label{sec:nasm}

Bleher and Fokin \cite{blefo} have already given an asymptotic
estimate of the number $u_n$ of ASMs using Riemann-Hilbert methods. 
For completeness, we give an estimate using more elementary
techniques. Recall that
\be 
u_n=\prod_{0\le i<n}\frac{(3i+1)!}{(n+i)!}
\ee
We proceed as above taking the logarithms,
\be
\begin{split}
\log (u_n)&=\sum_{1\le i<n} 
(n-i)\left( \log(3i-1)+\log(3i)+\log(3i+1)\right) \\
&-\sum_{1\le i<n} \left(n \log(i+1)+(n-i) \log(n+i)\right).
\end{split}
\ee
Then we consider the function:
\be
\begin{split}
f(x) &= (n-x)\left( \log(3x-1)+\log(3x) \right. \\
& \left. +\log(3x+1)-\log(n+x)\right) -n \log(x+1),
\end{split}
\ee
and the integral
\begin{eqnarray*}
I_{n} &=&\int_{1}^{n-1} f(x) dx.
\end{eqnarray*}
This time we get
\be
\begin{split}
I_n&=\frac{4}{3}\,(1-n)\log(2)+\frac{1}{2}\,n(n-2)\log(3)-n^2 \log(n) \\
&+\left( \frac{(3n+2)(3n-4)\log(3n-4)+(3n+4)(3n-2)\log(3n-2)}{18}\right) \\
& +\left( \frac{(n-1)(n+1)\log(n-1)}2 \right. \\
&\left. \frac{+(n+1)(3n-1)\log(n+1)-(2n-1)(2n+1)\log(2n-1)}{2}\right).
\end{split}
\ee
Again the Euler-Maclaurin formula gives
\be
\begin{split}
S_n-\frac{1}{2}&(f(1)+f(n-1))=\frac{f(1)}{2}+f(2)\cdots
+f(n-2)+\frac{f(n-1)}{2} \\ 
&= I_n +\sum_{k=1}^{\infty}\frac{B_{2k}}{(2k)!}\left(
f^{(2k-1)}(n-1)-f^{(2k-1)}(1)\right).
\end{split}
\ee
%Where the  $B_k$ are the Bernoulli numbers;
Using $\log(an+b) =  \log(a)+\log(n)+\frac{b}{an}
-\frac{b^2}{2a^2n^2}+O(1/n^3)$, we get
\be
\begin{split}
 I_n+\frac{1}{2}(f(1)+f(n))
&=\frac{1}{9}\log(n)-n\left( 2n+\frac{1}{3}\right)\log(2) \\
&+n\left( \frac{3}{2}n-\frac{1}{2}\right)\log(3)+n +O(1), \\
\frac{B_2}{2!}\left( f'(n-1)-f'(1)
\right)&=-\frac{11}{48}n-\frac{1}{4}\log(n)+O(1), \\
\frac{B_4}{4!}\left( f^{(3)}(n-1)-f^{(3)}(1)\right)&=
\frac{299}{23040}n+O(1), \\
\log(u_n) &=-2n^2\log(2)+\frac{3}{2}\,n^2\log(3)\\
&+n\,\log(\beta )-\frac{5}{36}\log(n)+O(1),
\end{split}
\ee
which leads to
\be \label{asmasym}
u_n=\gamma\,\,\left( \frac{27}{16}
\right)^{n^2/2}\beta^n\,\,n^{-5/36}\left( 1+O(1/\log(n))\right),
\ee
where $\gamma$ and $\beta$ are constants.
Notice that the number of ASMs grows like 
$(\lambda_1)^{n^2}$ where
\be \label{asmentr}
\lambda_1= \left(\frac{27}{16}\right)^{1/2}=\frac{3 \sqrt{3} }4,
\ee
while we have found the lower bound $(\lambda_2)^{n^2}$ for the number of
those avoiding $312$ is
\be \label{pbranentr}
\lambda_2 = \frac{3^{3/4}}{2}.
\ee

\bibliographystyle{alpha.bst}
\bibliography{asm}

\end{document}